\newcommand{\ncal}{\mathcal{N}}
\newcommand{\crit}{{\operatorname {crit}}}
\newcommand{\ga}{\gamma}
\newcommand{\De}{\Delta}
\newcommand{\szego}{Szeg\"o }
\newcommand{\inv}{^{-1}}
\newcommand{\kahler}{K\"ahler }
\newcommand{\wt}{\widetilde}
\newcommand{\wh}{\widehat}
\newcommand{\PP}{{\mathbb P}}
\newcommand{\R}{{\mathbb R}}
\newcommand{\C}{{\mathbb C}}
\newcommand{\CP}{\C\PP}
\renewcommand{\d}{\partial}
\newcommand{\dbar}{\bar\partial}
\newcommand{\ddbar}{\partial\dbar}
\newcommand{\D}{{\mathbf D}}
\newcommand{\half}{{\frac{1}{2}}}
\newcommand{\FS}{{{\operatorname{FS}}}}
\renewcommand{\phi}{\varphi}
\newcommand{\lcal}{\mathcal{L}}
\newcommand{\ocal}{\mathcal{O}}
\newcommand{\E}{{\mathbf E}\,}
\newcommand{\al}{\alpha}
\newcommand{\be}{\beta}
\newtheorem{lem}{Lemma}
\newtheorem{prop}{Proposition}
\newtheorem{thm}{Theorem}
\newtheorem{cor}{Corollary}
\newcommand{\tr}{{\operatorname{Tr}}}
\newcommand{\sn}{|s_n|_{h^n}}
\renewcommand{\d}{\partial}
\newtheorem{rem}{Remark}
\newcommand{\eh}{e^{-\frac{n\phi}2}}
 \def   \half   {{\frac{1}{2}}}
\def \to {\rightarrow}
\begin{document}\title[Critical values]{Critical values of  random analytic functions on complex manifolds}
\author[Renjie Feng]{Renjie Feng}
\author[Steve Zelditch]{Steve Zelditch}
\address{Department of Mathematics and Statistics, Mcgill University, Canada}
\email{renjie@math.mcgill.ca}
\address{Department of Mathematics, Northwestern University, USA}
\email{zelditch@math.northwestern.edu}

\thanks{Research partially supported by NSF grant  DMS-1206527.}

\date{\today}

\begin{abstract}  We study  the asymptotic  distribution of critical values
of  random holomorphic sections $s_n \in H^0(M^m, L^n)$ of powers of a positive line bundle $(L, h) \to (M, \omega)$
on a general K\"ahler manifold of dimension $m$. By critical value is meant the value of $|s(z)|_{h^n}$ at a critical point where $\nabla_h s_n(z) = 0$,
where $\nabla_h$ is the Chern connection. The distribution of critical values of $s_n$ is its empirical measure. Two main ensembles are considered: (i) the normalized Gaussian ensembles
so that $\E ||s_n||^2_{L^2} = 1$ and (ii)  the spherical ensemble  defined by Haar measure on
the unit sphere $SH^0(M, L^n) \subset H^0(M, L^n)$  with $||s_n||^2_{L^2} = 1$.
%When $M = \CP^m$,  $L = \ocal(1)$ and $h $ is the
%Fubini-Study metric, the Gaussian ensemble specializes to the  $SU(m + 1)$ polynomials.
The main result is that
the expected distributions of critical values in both the normalized Gaussian ensemble and the spherical ensemble tend
to the same universal limit as $n \to \infty$, given explicitly as an integral over $m \times m$ symmetric matrices.

\end{abstract}

\maketitle
\section{Introduction}

The purpose of this article is to determine the asymptotic distribution of critical values of  random holomorphic polynomials
of large degree,  and their generalizations (`holomorphic sections')  to all compact K\"ahler manifolds $(M, \omega)$
of any dimension $m$.  We work in the same general  setting
as the articles \cite{DSZ1,DSZ2,DSZ3} on the distribution of critical points on K\"ahler manifolds and recall
the definitions in \S \ref{BACKGROUND}.  We first consider the critical value distribution of Gaussian random `polynomials'
$s_n$ as the degree $n \to \infty$ and then consider the more difficult and interesting problem of critical values of $L^2$ normalized  random polynomials with $||s_n||_{L^2} =1$. We refer to the latter as the {\it spherical critical value distribution} since the `polynomials' are drawn
at random from the unit sphere in Hilbert space.   We regard  the spherical distribution as primary for the critical
value distribution since   a critical value is only counted once
in a line $\{c s_n, c \in \C\}$ of sections and one can relate the heights of the critical values to the other
threshold heights of $L^2$ normalized `polynomials'. Theorem \ref{maintheorem}  shows that a special {\it normalized  Gaussian critical value distribution} has a universal
limit independent of the manifold and K\"ahler metric. The main result of this article,  Theorem \ref{PBLIKE},  shows that  the {\it spherical  critical value distribution}  has
the same universal  limit. We also give a limit formula for the simpler spherical  value distribution in Theorem \ref{SDV}.
The spherical limit results may be viewed as  a  Poincar\'e-Borel theorems for critical values.

The spherical  distribution of critical values is potentially useful in analyzing the Morse theory of  the modulus $|s_n(z)|_{h^n}^2$ of random sections, since it is at critical values
that the level sets change topology. They are also important in understanding the topography of random surfaces,
i.e. the graphs of the modulus  $y = |s_n(z)|^2_{h^n}$ of `polynomials' of degree $n$, which are often visualized as mountain landscapes
above some given sea level.   It is known that $\sup_{z \in M } ||s_n(z)||_{h^n} \leq C m^{n/2} $ when $||s_n||_{L^2} = 1$ and $\dim_{\C} M = m$   and it is proved in  \cite{SZ} (after a long
history of similar results in other settings)
that the expected sup norm of such normalized sections is bounded by a universal constant times $\sqrt{\log n}$.
 Thus in  a measure sense,  typical `polynomials' of degree $n$ and norm one
have global maxima  $\leq C \sqrt{\log n}$, and conjecturally the median should be of this form for some $C$
\footnote{The methods of this article give an explicit value of C, which we defer to a future article.}.  Deterministically,  critical values of all normalized
polynomials  lie in $[0, C n^{m/2}]$.
%There
%is a long history behind results of this kind which we will not review.
% The fine details
%are not known at this time and it is helpful to know the explicit formula for the critical value distribution in the
%spherical ensemble.
It would be interesting to know the exact  height  (for $L^2$ normalized polynomials) at which the peak of the random mountain first occurs. At a certain threshold  height,  the mountain tops are sometimes conjectured to form a Poisson
spatial process, and it would be interesting to know the  connectivity properties of the landscape at lower sea levels. The calculation of the spherical density
of critical points  is only   a  calculation but it is probably a necessary one for the more involved landacape questions.  The K\"ahler setting  is a model for other settings in which one studies normed random waves with
a notion of degree or eigenvalue, such as random spherical harmonics or more general Riemannian waves on Riemannian manifolds,
in which the principal modification is in  the asymptotics of the relevant covariance functions.

Before stating the results, we introduce some notation and background.
Compact complex manifolds have no non-constant holomorphic functions and the natural replacement for them are
twisted holomorphic functions know as holomorphic sections of  complex line bundles $\pi: L \to M$. Here, the fiber $L_z$
over $z \in M$ is a one-complex dimensional space and a holomorphic section is a map $s: M \to L$ satisfying
$\dbar s = 0, \pi \circ s(z) = z$.  Degree $n$ sections are sections of the $n$th tensor power $L^n$ of $L$, and
the space of holomorphic sections is denoted $H^0(M, L^n)$. Its dimension is given asymptotically  by
\begin{equation} \label{dn} d_n = \dim_{\C} H^0(M, L^n) \simeq C n^m. \end{equation} When $\dim_{\C} M= 1$, i.e. when $M$ is a Riemann surface, then the natural examples
are polynomials of degree $n$ ($g = 0$), theta functions of degree $b$ $(g= 1)$ and holomorphic differentials
of type $(dz)^n$ for $g \geq 2$.
The techniques and results of this article, as in the predecessors \cite{BSZ,DSZ1}, hold in this general geometric setting.

The K\"ahler metric $\omega$ determines a Hermitian metric $h$ and  connection $\nabla$ on $L$ and on its powers. The
Hermitian metric satisfies $\ddbar \log h = \omega$, and the connection $\nabla$ is  known as the Chern
connection and is compatible with the Hermitian metric $h$ on and complex structure on $L$.
As recalled in \S \ref{BACKGROUND}, the Hermitian metric $h$ and K\"ahler form $\omega$ give rise to a definition
of Gaussian random holmorphic section in $H^0(M, L^n)$.

By a critical point of
a holomorphic section $s \in H^0(M, L^n)$ we mean a point $z \in M$ so that
\begin{equation} \label{P} \nabla s(z) = 0. \end{equation}
Thus the section is `parallel' at $z$.   Equivalently, critical points are points where the norm square is critical  $d |s_n|^2_{h^n}=0$ and so we are studying
the critical points and values of the real-valued function $|s(z)|^2_{h^n}$. More precisely,
\begin{equation}  d |s_n|^2_{h^n}=0 \Longleftrightarrow \nabla  s_n=0 \,\,\,\mbox{or}\,\,\, s_n=0.\end{equation}
We note that $x = 0$ is surely a critical value of $|s_n(z)|^2_{h^n}$ since every section has zeros. However
we omit the zero value in the definition of the empirical measure of critical values.
 When the Hermitian line bundle $(L,h)$ is positive, as we assume, the only local minima of $|s|_h$ are its zeros.
Therefore the critical points in \eqref{criticalmeasure} are either saddle points or local maxima.

To define Gaussian random holomorphic sections,  introduce a family of Gaussian measures adapted to the Hermitian metric and the associated inner product
\eqref{innera} on sections. For any $\alpha >0$ we put
\begin{equation} \label{HGMg} d\gamma_{\alpha}^n(s_n)=(\frac{\alpha}{\pi})^{d_n}e^
{-\alpha|a|^2}da\,,\qquad s_n=\sum_{j=1}^{d_n}a^n_js^n_j\,,\end{equation}
where $\{s^n_1, \cdots, s^n_{d_n}\}$ is the orthonormal basis of  $H^0(M,L^n)$ with respect to the inner product \eqref{innera}.
 Equivalently, the
coefficients $a_j^n$ are complex Gaussian random variables which satisfy the following normalization conditions,
 \begin{equation}\label{law}\E_n^\alpha a^n_k=0,\,\,\,\E_n^{\alpha} a^n_k\bar a^n_j=\frac 1{\alpha}\delta_{kj},\,\,\, \E_n^{\alpha} a^n_ka^n_j=0\end{equation}
  Here, we denote  the expectation with respect to $\gamma^n_{\alpha}$ by  $\E_n^{\alpha}. $
 Under this normalization, we have the expected $L^2$ norm of $s_n$,  \begin{equation}\label{norm1}\E_n^{\alpha} \|s_n\|^2_{h^n}= \frac{d_n}{\alpha}.\end{equation}
Thus  the covariance kernel of $\gamma_{\alpha}^n$ is
\begin{equation} \label{SZ} \E_n^{\alpha}( s(z) \overline{s(w)} ) = \frac{1}{\alpha} \Pi_n(z,w),  \end{equation}
where $\Pi_n$ is the Szeg\"o projector with respect to \eqref{innera}.

 When $\alpha = d_n$ we call the
ensemble the {\it normalized Gaussian measure}. It is given by
\begin{equation} \label{HGM} d\mu_h^n(s_n)=(\frac{d_n}{\pi})^{d_n}e^
{-d_n|a|^2}da, \end{equation} and from \eqref{norm1} it follows that
\begin{equation} \label{dn1} \E_n^{d_n} ||s_n||^2_{h^n} = 1. \end{equation}
 As will be seen below, the density of critical values has a limit for this sequence of probability measures.
We discuss the relations of the densities as $\alpha$ varies in \S \ref{GAUSS}.

The distribution of critical points of a section is defined by the un-normalized  empirical measure
\begin{equation} C_s=   \sum_{z: \nabla s(z) = 0} \delta_z. \end{equation} The  Chern
connection $\nabla$ associated with $h$ is not holomorphic, and the number of
critical points depends on the section $s$.  The statistics
of the number of critical points in the normalized  Gaussian ensemble was
 determined
in \cite{DSZ1, DSZ2}. The critical point distribution is invariant under $s \to c s$ and therefore
it is equivalent to work with Gaussian or spherical distributions.  It is proved in Corollary 5 of \cite{DSZ2} that the expected number $\ncal^\crit_{n,q,h}$ of
critical points of Morse index $q$ for any positively curved Hermitian metric $h$ on a K\"ahler manifold
of dimension $m$   satisfies
\begin{equation} \label{number} \ncal^\crit_{n,q,h} \sim
\left[\frac{\pi^m b_{0q}}{m!}\,c_1(L)^m\right]n^m. \end{equation}
Here, $b_{0q}$ is a Betti number and $c_1(L)$ is the first Chern class, both of which are topological invariants.
For instance,
  in dimension one there are roughly $\frac{4}{3} n$ saddles points and $\frac{1}{3} n$ local maximal for
Gaussian random polynomials of degree $n$ with the $SU(2)$ (or Fubini-Study) inner product.

\subsection{Statement of results}

In this article we study the Gaussian, resp. spherical,  distribution of  {\it critical values},
\begin{equation}  \mathcal {CV}_s : = \label{Nabla}\{ |s(z)|_{h^n} : \nabla s(z) = 0, \;\; s \in H^0(M, L^n)\}
 \end{equation}
in the limit as the degree $n$ tends to $\infty$. Thus, the ``value" of the section at a critical point is the Hermitian norm in $\R$
of the section; since $s(z) \in L_z$ it would not make sense to study the values themselves.
  In view of \eqref{number}, we  define the (normalized) empirical measure of nonvanishing critical values of $\sn$ by
\begin{equation}\label{criticalmeasure}CV_s=\frac 1{n^m}\sum_{z:\,\, \nabla's_n(z) = 0}
\delta_{|s_n|_{h^n}}.
\end{equation}
Note that it is not necessarily a probability measure but from the results of \cite{DSZ2} (such as \eqref{number}) it
follows  that for any $\epsilon > 0$ there exists a constant $C$ so that $\# \{z: \nabla s(z) = 0\} \leq C n^m$
except for a set of sections of measure $< \epsilon$.

We define  the Gaussian density of critical values $\D_n^{\alpha}(x)$ as the expected density of $\E_n^{\alpha}\, CV_s$ in the sense of distribution,
\begin{equation}\label{expectdensity}\E_n^{\alpha}\left(\frac 1{n^m}\sum_{z:\,\,\nabla's_n=0} \psi(\sn)\right)=\int_{\R^+}\psi(x) \D_n^{\alpha}(x) dx\,\,\,\mbox{for}\,\,\psi\in C_0(\R_+)\end{equation}
 where $dx$ is the Lebesgue measure on $\R$.  In \S \ref{KR} we will
calculate the densities for all $\alpha$. The Kac-Rice  are quite complicated for fixed $n$, but for the normalized  Gaussian
ensemble there are simple asymptotics.

To state the result we need some notations.  We denote by $S(\C^m)\cong \C^\frac{m^2+m}{2}$ the space  of complex symmetric
matrices.  We also denote
by $d\xi$  the Legesgue measure on $S(\C^m)$. We also define the special matrix   $P:=(\delta_{jj'}\delta_{qq'}+\delta_{jq'}\delta_{qj'})_{\frac{m^2+m}{2}\times \frac{m^2+m}{2}}$.

\begin{thm}\label{maintheorem}Let $(M,\omega)$ be an $m$-dimensional compact \kahler manifold. Let $(L,h)\to M$  be a polarized positive holomorphic line bundle.
 Let $\D_n^{d_n}$ be the expected density of critical values defined in (\ref{expectdensity}) with $\alpha = d_n$, i.e.  the normalized
Gaussian density.
Then we have,
\begin{equation}\label{main}\D_n^{d_n}(x)=\D_{\infty}(x) + O\left(\frac{1}{n} (x( 1 + x^4) e^{-x^2})\right) \;\; \mbox{ on }\;\; (0, \infty), \end{equation}
where  $$\D_{\infty}(x) =  f_m(|x|)|x|e^{-|x|^2}, \;\; \mbox{with}\;\;f_m(|x|)=c_m\int_{S(\C^m)} e^{-|\xi|^2}\left||\sqrt P \xi|^2-|x|^2\right|d\xi. $$ Here, $c_m=\frac {2\pi}{\pi^{\frac {(m+1)(m+2)}2} }V$,
%\begin{equation}\label{constant}c_m=\frac {2\pi}{\pi^{\frac {(m+1)(m+2)}2} },\end{equation}
  where $V$ is the volume of $(M, \omega)$.    The asymptotics can be differentiated any
number of times (with appropriate changes in the polynomial growth  in the remainder estimate.)

\end{thm}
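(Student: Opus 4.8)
The plan is to reduce the computation to a Kac--Rice integral for the zero set of the gradient section $\nabla' s_n$ and then to pass to the universal scaling limit of the \szego kernel. First I would write the expected critical value density as a Kac--Rice density: for $\psi\in C_0(\R_+)$,
\begin{equation}
\E_n^{d_n}\Big(\frac{1}{n^m}\sum_{z:\nabla' s_n(z)=0}\psi(\sn)\Big)
=\frac{1}{n^m}\int_M \E_n^{d_n}\big[\,\psi(\sn)\,\jcal_n(z)\,\big|\,\nabla' s_n(z)=0\,\big]\,p_{\nabla' s_n(z)}(0)\,dV(z),
\end{equation}
where $p_{\nabla' s_n(z)}$ is the Gaussian density of the gradient at $z$ and $\jcal_n(z)=\big|\det\operatorname{Hess}\,\sn^2(z)\big|$ is the Jacobian of $z\mapsto\nabla' s_n(z)$ at a zero, i.e. the modulus of the determinant of the real Hessian of $\sn^2$ at the critical point. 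Since $d\,\sn^2=0$ there, this Hessian splits into the holomorphic symmetric Hessian $\nabla^2 s_n$ and a curvature term proportional to the value $\sn^2$, so the integrand depends only on the $2$-jet $(s_n(z),\nabla' s_n(z),\nabla^2 s_n(z))$.

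The second step is to identify the joint law of this $2$-jet. Under $\gamma_{d_n}^n$ it is a centered complex Gaussian whose covariance is assembled from \eqref{SZ} and its first and second covariant derivatives in $z$ and $\bar w$ evaluated at $w=z$. I would then invoke the near-diagonal scaling asymptotics of the normalized \szego kernel from \cite{DSZ1,DSZ2}: after the rescaling $z\mapsto z/\sqrt{n}$ and the normalization $\alpha=d_n$, these covariances converge, uniformly on $M$ and in the $C^\infty$ topology, to the universal covariances of the model (Bargmann--Fock/Heisenberg) ensemble, with an expansion whose subleading term is $O(1/n)$. This is where the manifold and metric drop out and universality enters; the only surviving global datum is the volume $V=\vol(M,\om)$ coming from $\int_M dV$.

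The third step is the model computation. Feeding the limiting covariance into the Kac--Rice integrand, the conditioning on $\nabla' s_n(z)=0$ decouples the gradient, leaving the joint law of the value and the normalized holomorphic Hessian $\xi\in S(\C^m)$. The value contributes the complex-Gaussian modulus factor $|x|e^{-|x|^2}$, while the Jacobian, expressed through the curvature-normalized Hessian, contributes $\big||\sqrt P\xi|^2-|x|^2\big|$, where $P$ is precisely the Gram matrix of the Hilbert--Schmidt form on $S(\C^m)$ (so that $|\sqrt P\xi|^2=\tr(\xi\bar\xi)$) and the curvature sets the scale $|x|^2$ of the value term. Integrating $\xi$ against the Gaussian weight $e^{-|\xi|^2}$ over $S(\C^m)$ produces $f_m(|x|)$, and collecting the Gaussian prefactors and the gradient density $p_{\nabla' s_n}(0)$ together with $V$ yields $c_m$ and hence $\D_{\infty}(x)=f_m(|x|)|x|e^{-|x|^2}$.

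Finally, for the remainder, the $O(1/n)$ term in the kernel expansion propagates through the smooth, rapidly decaying Kac--Rice integrand; tracking its $x$-dependence through the Gaussian value factor and the polynomial Jacobian yields the bound $O\big(\tfrac1n\,x(1+x^4)e^{-x^2}\big)$, and since the scaling asymptotics hold in $C^\infty$ one may differentiate in $x$ at the cost of raising the polynomial power in the remainder. I expect the main obstacle to be the third step: the value and the Hessian remain correlated after conditioning on $\nabla' s_n=0$, so one must diagonalize the conditional covariance correctly and then integrate the \emph{absolute value} of the determinant, which admits no algebraic simplification, while keeping the estimates uniform in $x$. Verifying that the correlations produce exactly the matrix $P$ and the clean scale $|x|^2$, and that the error is controlled uniformly both as $x\to 0^+$ and as $x\to\infty$, is the delicate part.
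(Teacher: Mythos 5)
Your overall strategy coincides with the paper's: a Kac--Rice formula for the zeros of the Gaussian field $\nabla' s_n$, covariance asymptotics for the $2$-jet $(s_n,\nabla's_n,\nabla^2 s_n)$ extracted from the \szego/Bergman kernel, decoupling of the gradient, and a model Gaussian integral over $S(\C^m)$ producing $f_m$ after the rescalings $\xi\to n\xi$ and $\xi\to\sqrt P\xi$, with the remainder obtained by expanding the exact Kac--Rice integrand in powers of $1/n$. (One cosmetic difference: the paper does not need near-diagonal Bargmann--Fock scaling asymptotics; since the Kac--Rice integrand only involves derivatives of $\Pi_n(z,w)$ restricted to the diagonal, it simply differentiates the on-diagonal TYZ expansion \eqref{bergman} in adapted K\"ahler normal coordinates, see \S\ref{covariance}. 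Your route via \cite{DSZ1,DSZ2} is equivalent in content.)

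There is, however, one genuine error in your Step 1. The Kac--Rice Jacobian for counting zeros of the field $q_n:=\nabla's_n$ is $|\det Dq_n|$, the real Jacobian of $q_n$ as a map $\R^{2m}\to\R^{2m}$; it is \emph{not} equal to $\bigl|\det\operatorname{Hess}|s_n|^2_{h^n}\bigr|$ at a critical point. Indeed $d|s_n|^2_{h^n}=2\,\mathrm{Re}\,h^n(\nabla's_n,s_n)$, so at a zero of $q_n$ the real Hessian is $2\,\mathrm{Re}\,h^n(Dq_n(\cdot),s_n)$, which differs from $Dq_n$ by composition with multiplication by a complex scalar of modulus $2|s_n|_{h^n}$; hence $\bigl|\det\operatorname{Hess}|s_n|^2_{h^n}\bigr|=2^{2m}|s_n|^{2m}_{h^n}\,|\det Dq_n|$. (Check in $m=1$, adapted frame at the point: $\bigl|\det\operatorname{Hess}|s|^2_{h^n}\bigr|=4|f|^2\,\bigl||f''|^2-n^2|f|^2\bigr|$, whereas $|\det Dq_n|=\bigl||f''|^2-n^2|f|^2\bigr|$.) Pairing the Gaussian density $p_{\nabla's_n(z)}(0)$ with the Hessian determinant, as your displayed formula does, is therefore an inconsistent hybrid: it would produce $(2x)^{2m}\D_n^{d_n}(x)$ instead of $\D_n^{d_n}(x)$, a value-dependent discrepancy that cannot be absorbed into constants and would wreck the claimed limit. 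The two consistent options are: (a) zeros of the Gaussian field $q_n$ weighted by $|\det Dq_n|$ --- this is the paper's Lemma \ref{KR}, where $Dq_n$ at a critical point is assembled from the holomorphic Hessian $\xi$ and the curvature term $-n\d\dbar\phi\,p_n$, yielding exactly $\bigl|\det(\xi\xi^*-n^2Ix^2)\bigr|$ --- or (b) zeros of the real gradient $d|s_n|^2_{h^n}$, in which case the Hessian determinant is the right weight but the density of $d|s_n|^2_{h^n}$ at $0$ is non-Gaussian and carries a compensating factor $(2x)^{-2m}$. Since your Step 3 in fact uses the correct weight $\bigl||\sqrt P\xi|^2-|x|^2\bigr|$ with no extra power of $x$, the error does not propagate to your final formula, but as written Steps 1 and 3 contradict each other; replacing the Jacobian in Step 1 by $|\det Dq_n|$ makes your argument correct and essentially identical to the paper's.
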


\begin{rem} Henceforth we generally set $V = 1$ for notational simplicity.\end{rem}

In the case of Riemann surfaces when $m=1$, we have $P=2$. Assuming the  volume of $M$ is $\pi$, then
\begin{equation} \label{n=1} \D_n^{d_n}(x)= x\left(2x^2-4+8e^{-\frac{x^2}2}\right) e^{-x^2}+O(\frac{e^{-x^2} (1 + x^5)}{n}). \end{equation}
Below is the computer graphic of the leading term,
\begin{center} \label{graph of  $f_1(x)e^{-x^2}$}\text{Graph of  $D_{\infty}(x)$ in dimension one}
\includegraphics[scale=0.5]{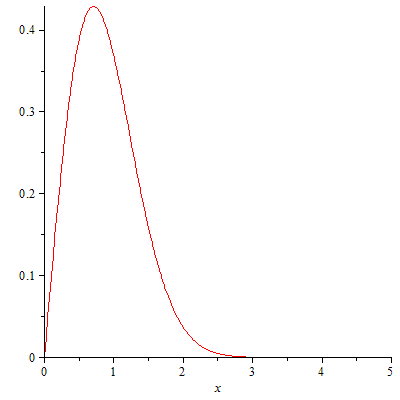}
 \end{center}
\bigskip

The critical point densities for $\D_n^{\alpha}$ have the scaling relation $\D_n^{\alpha}(x) =  \alpha^{\half} \D_n^1(\alpha^{\half} x) $ (Lemma \ref{alpha}), and from this one can determine the asymptotics for the
other Gaussian ensembles.
We also give a similar formula for the simpler  expected distribution function  of random sections in \S \ref{VDSec}.

The proof of  Theorem \ref{maintheorem} is
is  based on the Kac-Rice formula in Lemmas \ref{KR} and \ref{pz}, which give exact formulae for all of the $\D_n^{\alpha}(x)$.
The asymptotics then follow from the complete asymptotic  expansion of the covariance kernel \eqref{SZ} in \S \ref{covariance}. In the case
of $SU(m+1)$ polynomials on $\CP^{m}$ we give an exact formula for all $n$ in \S \ref{FS}.

\begin{rem}

  As in \cite{DSZ2} Theorem 1.2 (see also \cite{Bau}),
we can give  similar formulae for the distribution of critical values when the critical point is constrained to have
a specified Morse index. The formula only changes in that we integrate over the subset  $S^q(\C^m)$ of  matrices of index $q$.
\end{rem}

\subsection{Density of critical values in the spherical ensemble}

As mentioned above, the critical point distribution is homogeneous, i.e the same for $s $ and $c s$. The critical value
distribution is however not homogeneous, since the critical values are multiplied by $|c|$. Since the mass
of the normalized  Gaussian measure is asymptotically  concentrated near the unit sphere as $d_n \to \infty$, the critical values of a line of sections $\{c s\}$  are weighted most
for values of $|c| $ close to 1. This weighting is a re-scaled version of   the one  in the classical  Poincar\'e Borel theorem,
which states that the spherical probability measure $\nu_d$ on the sphere $S^d(\sqrt{d})$
tends to the Gaussian measure  as $d \to \infty$.  More precisely, if $P_d:\R^d\to \R^k$ is
$P_d(x)=\sqrt{d}(x_1,\dots,x_k)$, then for all $k$,  $P_{d*}\nu_d \to \ga_k = (2 \pi)^{-d/2} e^{-|x|^2/2} dx\,.$
Moreover,
$$\gamma_d\{x \in \R^d: ||x||^2 \geq \frac{d}{1 - \epsilon}  \} \leq e^{- \epsilon^2 d/4},\; \;\; \gamma_d\{x \in \R^d: ||x||^2 \leq (1 - \epsilon) d \} \leq e^{- \epsilon^2 d/4}. $$

Our  spherical probability measure is normalized  Haar measure  $d\nu_n $ on \begin{equation} SH^0(M, L^n) =\{s \in H^0(M, L^n) : ||s||_{L^2} = 1\}.
\end{equation}  We refer to the corresponding
probability space as the  spherical ensemble. What we are calling the normalized Gaussian measure \eqref{HGM}
concentrates exponentially on this unit sphere.
We denote the expectation with respect to $d \nu_n$ by $\E_{\nu_n}$ and define the (normalized) spherical density of critical points $\D_n^S(x)$ by
\begin{equation} \label{SPHDEN} \E_{\nu_n} CV_s = \D^{S}_n(x) dx.  \end{equation}
In fact it makes more sense to pass to the quotient   Fubini-Study probability measure  on the projective space $\PP H^0(M, L^N)$ of sections since
the critical value distribution is invariant under multiplication by $e^{i \theta}$.

We view the spherical density \eqref{SPHDEN} as primary   because fixing $||s_n||_{L^2} = 1$ sets
a scale against which one can calibrate the  heights at which interesting features of the landscapes occur.
For instance, as mentioned above,   the  spherical critical value distribution  $\D_n^S$  is supported in $[0,  C \sqrt{d_n}]$, and its median should occur at a constant times $ \sqrt{\log n}$.  In the sequel we plan to study such distinguished levels
in more detail.
The main result of this article is:

\begin{thm} \label{PBLIKE}  The density of critical values in the spherical ensemble $SH^0(M, L^n)$ on
any compact K\"ahler manifold has the universal limit,
$$\lim_{n \to \infty} \D_n^S (u) = D_{\infty}(u). $$
\end{thm}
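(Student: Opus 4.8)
The plan is to deduce Theorem \ref{PBLIKE} from Theorem \ref{maintheorem} by a Poincar\'e--Borel passage, exploiting the fact that the normalized Gaussian measure \eqref{HGM} concentrates exponentially on the unit sphere $SH^0(M,L^n)$. First I would set up the radial decomposition of a Gaussian section. Write $s_n = r\,u_n$, where $r = \|s_n\|_{L^2}$ and $u_n = s_n/r \in SH^0(M,L^n)$; by the unitary invariance of the law \eqref{law}, the radial variable $r$ and the angular variable $u_n$ are independent, and $u_n$ is distributed by Haar measure $d\nu_n$. Since $\nabla(c\,s)=c\,\nabla s$ for a constant $c \in \C$, the sections $s_n$ and $u_n$ have exactly the same critical points, while their critical values rescale by $|s_n(z)|_{h^n} = r\,|u_n(z)|_{h^n}$. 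Testing the definition \eqref{expectdensity} against $\psi \in C_0(\R_+)$, integrating first over $u_n$ (via \eqref{SPHDEN}) and then over $r$, and changing variables $x = ry$ gives the exact multiplicative relation
\begin{equation}\label{Trel}
\D_n^{d_n}(x) = (T_n\,\D_n^S)(x), \qquad (T_n g)(x) := \int_0^\infty \rho_n(r)\,\frac{1}{r}\,g\!\left(\frac{x}{r}\right)\,dr,
\end{equation}
where $\rho_n$ denotes the probability density of $r = \|s_n\|_{L^2}$. Thus the (known) normalized Gaussian density is a multiplicative average of the (unknown) spherical density against the law of the norm.

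Next I would quantify the concentration of $\rho_n$. From \eqref{law} with $\alpha = d_n$ one has $d_n\,r^2 = \sum_{j=1}^{d_n}|b_j|^2$ with $b_j$ i.i.d.\ standard complex Gaussians, so $d_n r^2$ is $\mathrm{Gamma}(d_n)$-distributed with $\E\,r^2 = 1$ and $\mathrm{Var}(r^2) = 1/d_n = O(n^{-m})$; the Poincar\'e--Borel tail bounds recalled above then give exponential concentration of $r$ at $1$. Consequently $\rho_n \to \delta_{r=1}$ and $T_n \to \mathrm{Id}$, so \eqref{Trel} says that $\D_n^{d_n}$ and $\D_n^S$ differ only through an averaging kernel that collapses to the identity. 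Quantitatively, expanding $r \mapsto \tfrac{1}{r}\D_n^S(x/r)$ to second order about $r=1$ and using the moment bounds on $r-1$ would give $\D_n^{d_n}(x) = \D_n^S(x) + O(n^{-m})$, once suitable derivative bounds on $\D_n^S$ are in hand.

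To make this rigorous I would establish uniform a priori bounds on the spherical density: that $\D_n^S$ and its first two derivatives are bounded, with a uniform Gaussian tail $\D_n^S(x) \le C\,x(1+x^4)e^{-cx^2}$, independently of $n$. These come from the spherical analogue of the Kac--Rice Lemmas \ref{KR} and \ref{pz}, i.e.\ from conditioning the joint law of the $2$-jet $(s_n(z),\nabla s_n(z),\nabla^2 s_n(z))$ on $\|s_n\|_{L^2}=1$, together with the off-diagonal decay of the covariance kernel \eqref{SZ}. Granting these bounds, $\{\D_n^S\}$ is precompact (Arz\'ela--Ascoli on compacts plus tightness from the tail), and any subsequential limit $D$ satisfies $T_n\D_n^S \to D$; since $T_n\D_n^S = \D_n^{d_n} \to \D_{\infty}$ by Theorem \ref{maintheorem}, the limit equation forces $D = \D_{\infty}$, so the full sequence converges to $\D_{\infty}$. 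I expect the main obstacle to be precisely these uniform $C^2$ and tail estimates for $\D_n^S$: conditioning on the global norm couples the local jet at $z$ to all the coefficients $a_j^n$, and one must show this coupling is asymptotically negligible — in effect a second, jet-level application of the Poincar\'e--Borel principle — so that the conditioned Kac--Rice density inherits the same uniform control as its Gaussian counterpart.
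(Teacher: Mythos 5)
Your starting point is sound and in fact coincides with the paper's: the multiplicative relation \eqref{Trel} obtained from the polar decomposition $s_n = r\,u_n$ is exactly Lemma \ref{SPH} (with $\rho_n(r) = \frac{d_n^{d_n}}{\pi^{d_n}}\omega_{2d_n}e^{-d_n r^2}r^{2d_n-1}$ the law of the norm), and your identification of $d_n r^2$ as a $\Gamma(d_n)$ variable and its concentration at $1$ is correct. The divergence — and the genuine gap — is in how you extract $\D_n^S$ from this relation. Your plan is to show $T_n \to \mathrm{Id}$ and run Arz\'ela--Ascoli, but this requires uniform $C^2$ bounds and uniform Gaussian tails for the \emph{unknown} density $\D_n^S$, which you defer to a ``spherical analogue of the Kac--Rice Lemmas'' obtained by conditioning the $2$-jet on $\|s_n\|_{L^2}=1$. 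That conditioned field is not Gaussian, the jet at $z$ is coupled to all $d_n$ coefficients, and no such formula or uniform estimate is established in your proposal; you name it as the main obstacle, but it is precisely where the mathematical content lies. Assuming uniform regularity of the density whose limit one is trying to compute makes the argument incomplete, and carrying out the conditioned Kac--Rice analysis with the required uniformity (the determinant factor $|\det(\xi\xi^*-n^2Ix^2)|$ grows polynomially in the jet variables, so one needs quantitative density convergence with tails) is essentially a second, independent proof of the theorem.

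The paper's proof avoids needing any a priori information about $\D_n^S$ by inverting the relation rather than approximating it: Lemma \ref{SPH} is rewritten as a Laplace transform identity \eqref{PBLAP2}, $\lcal(\rho^{d_n-\frac{3}{2}}D_{d_n^{-1}}d_S^n)(y) = K_n y^{-d_n+\half}d_n^{-\half}\D_n^{d_n}(y^{\half})$; the left-hand density $\D_n^{d_n}$ is known explicitly (Proposition \ref{DSUM} for $SU(m+1)$, Theorem \ref{maintheorem} with its remainder in general) and is itself expressed as a Laplace transform via the translation and fractional-integration identities \eqref{IDENTITIES}; uniqueness (injectivity) of $\lcal$ then yields a closed formula for $\D_n^S$, as in \eqref{dsn}, whose pointwise limit is computed by monotone convergence. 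Note that a weakened version of your argument does survive without any regularity input: since $T_n$ preserves total mass and $\rho_n$ concentrates, the uniform mass bound alone gives vague convergence of the measures $\D_n^S(x)\,dx \to \D_\infty(x)\,dx$ on $(0,\infty)$; but Theorem \ref{PBLIKE} asserts pointwise convergence of the densities, and upgrading from vague convergence to that statement is exactly what your missing a priori bounds would have to supply.
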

Thus, the spherical critical point
distribution tends to the same universal limit as the normalized  Gaussian measure  of Theorem \ref{maintheorem}.
As the asymptotics (and Graph) indicate, the most probable critical value and the  median of the  critical value distribution
$\D_n^S$  is around $1$ in dimension one. An  upper bound for the median may be derived from the exact formula
for the spherical critical point density. In a subsequent  article we will apply Theorem \ref{PBLIKE}  or more precisely
its proof to  obtain a formula for the median and for the asymptotics of the critical point density with fixed Morse
index in special $n$-dependent intervals.

As discussed above, Theorem \ref{PBLIKE} is a Poincar\'e-Borel type theorem for critical values. Intuitively it
is based on the concentration of normalized  Gaussian measure around the unit sphere, but in its details it uses the special
scalings of the critical value distribution and the asymptotics of the covariance kernels and therefore does not
seem to follow directly from the classical Poincar\'e-Borel theorem. The proof is based on a Laplace transform
relation between  the spherical and normalized  Gaussian critical value distributions.

The Poincar\'e-Borel relation between the normalized  Gaussian and spherical expectations
of the critical value distribution holds also for the full value distribution. We denote the density
of values in the spherical ensemble by $f_n^S(u) du$. In
 \S \ref{VDSec} we prove:

\begin{thm} \label{SDV}  The density of values in the  spherical ensemble $SH^0(M, L^n)$ of any K\"ahler manifold
has the limit, $$\lim_{n \to \infty} f^S_n(u) =  2 u e^{- u^2}. $$ \end{thm}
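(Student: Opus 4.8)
The plan is to reduce the value distribution to the one-point behavior of $\sn$ and then run a Poincar\'e--Borel comparison between the spherical and the normalized Gaussian ensembles, exactly parallel to the proof of Theorem \ref{PBLIKE} but in the much simpler setting where no Kac--Rice computation is needed. I interpret $f^S_n(u)\,du$ as the expected distribution of $\sn$ evaluated at a point $z$ drawn from the normalized volume $\vol/V$ and a section $\sigma$ drawn from Haar measure on $SH^0(M,L^n)$.

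First I would compute the value density in the normalized Gaussian ensemble \eqref{HGM}. For each fixed $z$, the evaluation $s_n(z)$ is a centered complex Gaussian in the fiber $L^n_z$ with variance $\sigma_z^2 = \frac1{d_n}\Pi_n(z,z)$ by \eqref{SZ}, so $\sn$ at $z$ is Rayleigh with density $\frac{2u}{\sigma_z^2}e^{-u^2/\sigma_z^2}$. The diagonal Szeg\"o/Bergman asymptotics of \S\ref{covariance} give $\Pi_n(z,z)=\frac{d_n}{V}(1+O(1/n))$ uniformly on $M$, hence $\sigma_z^2\to 1$ uniformly (with $V=1$). Averaging over $M$ then shows the Gaussian value density tends to $2ue^{-u^2}$; equivalently, writing $x=u^2$, the squared-value density tends to $e^{-x}$, whose Laplace transform is $(1+\lambda)^{-1}$.

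Next I would set up the relation between the two ensembles by the radial/angular decomposition $s_n = t\,\sigma$, with $t=\|s_n\|_{L^2}$ and $\sigma\in SH^0(M,L^n)$. Under \eqref{HGM} the variables $t$ and $\sigma$ are independent, and $T=t^2$ has the Gamma-type density $q_n(T)=\frac{d_n^{d_n}}{(d_n-1)!}\,T^{d_n-1}e^{-d_n T}$, which concentrates exponentially at $T=1$ by the Poincar\'e--Borel estimates quoted above. Since $\sn = t\,|\sigma(z)|_{h^n}$, the squared values multiply, so the Gaussian squared-value density $g^G_n$ and the spherical one $g^S_n$ obey the multiplicative convolution $g^G_n(x)=\int_0^\infty g^S_n(x/T)\,T^{-1}q_n(T)\,dT$. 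Taking the Laplace transform converts this into $\widehat{g^G_n}(\lambda)=\int_0^\infty \widehat{g^S_n}(\lambda T)\,q_n(T)\,dT$, the value-distribution analogue of the Laplace relation underlying Theorem \ref{PBLIKE}. Because $q_n\to\delta_1$, this integral differs from $\widehat{g^S_n}(\lambda)$ by $o(1)$, and combined with $\widehat{g^G_n}(\lambda)\to(1+\lambda)^{-1}$ this forces $\widehat{g^S_n}(\lambda)\to(1+\lambda)^{-1}$, giving $g^S_n(x)\to e^{-x}$ and $f^S_n(u)\to 2ue^{-u^2}$.

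The main obstacle is transferring the limit through the Laplace relation: concentration of $q_n$ only shows that the Gaussian and spherical transforms are asymptotically equal, so to conclude that the \emph{spherical} transform converges I must rule out escape of mass in $g^S_n$ and establish enough equicontinuity. The cleanest way to secure this, and in fact to bypass the transform entirely, is a direct description of the spherical law: for fixed $z$, $|\sigma(z)|^2_{h^n}=\Pi_n(z,z)\,Y_z$ where $Y_z\sim\operatorname{Beta}(1,d_n-1)$ is the squared modulus of a single coordinate of a uniform point on the complex unit sphere in $\C^{d_n}$. Since $d_nY_z$ converges to a unit exponential and $\Pi_n(z,z)/d_n\to 1$ uniformly by \S\ref{covariance}, the squared-value density converges to $e^{-x}$ after averaging over $M$, and the uniform Bergman bounds simultaneously supply the tail control needed to legitimize the Poincar\'e--Borel argument above.
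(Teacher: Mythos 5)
Your proposal is correct, and its decisive step takes a genuinely different route from the paper's. The paper works entirely through the Gaussian ensemble: it computes the Gaussian value density exactly (Proposition \ref{KRvalprop} --- at each $z$ the value is the modulus of a complex Gaussian of variance $\Pi_n(z,z)$), relates the spherical density to the Gaussian one by the Laplace-transform identity of Lemma \ref{SPHf} and Corollary \ref{CORf}, and then \emph{inverts the transform exactly} via \eqref{table}, first for $SU(m+1)$ (where $\Pi_n(z,z)$ is constant) and then for general $M$ by a remainder estimate, arriving at the closed formula \eqref{fsn} whose pointwise limit is $2ue^{-u^2}$. Your first thread is this same Laplace relation, and the defect you flag in it is real: concentration of $q_n$ at $T=1$ gives only asymptotic equality of the two transforms, hence weak convergence of measures, not the pointwise convergence of densities asserted by the theorem --- the paper sidesteps this precisely by exact inversion rather than by concentration. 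Your second thread repairs the gap in a different and more direct way: by unitary invariance of Haar measure on $SH^0(M,L^n)$, for fixed $z$ the law of $|\sigma(z)|^2_{h^n}$ is that of $B_n(z)\,Y$ with $B_n(z)=e^{-n\phi(z)}\Pi_n(z,z)$ and $Y\sim\operatorname{Beta}(1,d_n-1)$ (the squared modulus of one coordinate of a uniform point of the unit sphere of $\C^{d_n}$), so the spherical squared-value density is exactly
\begin{equation*}
g_n^S(x)=\int_M \frac{d_n-1}{B_n(z)}\left(1-\frac{x}{B_n(z)}\right)_+^{d_n-2}dV(z),
\end{equation*}
valid on any K\"ahler manifold; the uniform TYZ expansion $B_n(z)/d_n\to 1$ then gives $g_n^S(x)\to e^{-x}$ pointwise, i.e.\ $f_n^S(u)\to 2ue^{-u^2}$. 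This reproduces the paper's \eqref{fsn} in the Fubini--Study case $B_n\equiv d_n$ (your constant $\tfrac{d_n-1}{d_n}$ is the exact value of the paper's $K_n/(d_n\Gamma(d_n-1))$, which the paper approximates by $1$), and it needs neither the Gaussian ensemble nor any Laplace inversion. As for what each approach buys: the paper's route exercises exactly the machinery that is unavoidable in Theorem \ref{PBLIKE}, where the critical value distribution admits no one-point description of the spherical law, so the value computation doubles as a consistency check on that machinery; your route is more elementary and self-contained, produces an exact finite-$n$ formula in full generality, and yields pointwise (indeed locally uniform) convergence of the densities with no transform argument at all.
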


\subsection{Related results and problems}

Other articles over the last ten years  devoted to the statistics of critical points
of Gaussian random fields   include  \cite{DSZ1,DSZ2,DSZ3,Bau, Mac,NSV, B,GW,ABA,N1,N2,Z}. The Kac-Rice
formula for the Gaussian  critical value distribution of holomorphic sections were originally obtained in \cite{SZ3} but the asymptotics were
not determined as explicitly as in this article. As mentioned above, we view the Gaussian formalism mainly as a
method for computing
 the spherical distribution.   In the real domain, a Kac-Rice formula and an
asymptotic analysis of the critical point distribution are given in \cite{N2}.

The expected value of the empirical measure  \eqref{criticalmeasure}  is only the first and simplest of the many
probabilistic problems on critical values. As in the case of zeros or critical points, one may ask for the variance
of  linear statistics (pairings of smooth test functions with the empirical measure), the asymptotic normality of
linear statistics, large deviations properties and so on. As mentioned above, the precise structure of the landscapes
defined by $y = |s_n(z)|_{h^n}$ is unknown in many respects.

In the case of polynomials of one complex variable $p(z)$, one might instead use
the standard complex derivative $\frac{dp}{dz} $ to define critical points and critical values, but it is in fact a meromorphic connection with a pole at infinity
and leads to quite a different theory. To our knowledge, statistics of critical points in the latter sense have only been studied in \cite{H,FW}.
In \cite{FW}, the authors studied the expected density of critical values of Gaussian $SU(2)$ random polynomials $p_n$ defined on $\C$ with respect to the meromorphic connection $\frac{d}{dz}$.  Also, in \cite{H}, B. Hanin studies
the correlation between zeros and critical points for this classsical connection.

 The main result of \cite{FW} is that the (un-normalized) expected density of nonvanishing critical values of the modulus of $|p_n|$ satisfies $$\E\left(\sum_{z:\,\,|p_n|'=0}\delta_{|p_n|}\right)\sim 1/x$$ on $\R_+$ as $x$ away from $0$ where $|p_n|'=\frac{\d |p_n|}{\d z}$ .
This result is quite different from Theorem \ref{maintheorem}, due to the fact that the connection
 $d/dz$ is a flat meromorphic one rather than   the smooth but non-holomorphic connection of this article.

\section{\label{BACKGROUND} Background on Gaussian measures in the K\"ahler setting}

\subsection{\kahler manifolds}
Let $(M,\omega)$ be a compact \kahler manifold of complex dimension $m$ with the local \kahler potential $\omega=\d\dbar\phi$. Let $(L,h)\to M$ be a positive holomorphic line bundle such that the curvature of the Hermitian metric $h$,
\begin{equation}\Theta_h=-\d\dbar \log |e|_h^2\end{equation}
is a positive $(1,1)$ form \cite{GH}. Here, $e$ is a local non-vanishing holomorphic section of $L$ over an
open set $U \subset M$ such that locally $L|_U\cong U\times\C$ and $|e|_{h} = h(e, e)^{
1/2}$
is the pointwise $h$-norm of $e$.

We denote by $H^0(M,L^n)$ the space of global holomorphic sections of $L^n=L \otimes \cdots \otimes L$. Under the local coordinate, we can write the global holomorphic section as $s_n=f_ne^{\otimes n}$ where $f_n$ is a holomorphic function on $U$.
We denote the dimension of $H^0(M,L^n)$ by $d_n$.

The Hermitian
metric $h$ induces a Hermitian metric $h
^n$ on $L^n$ given by $|e^{ \otimes n}|_{h^n}=|e|_h^n$. Throughout the article, we assume the polarized condition $\Theta_h=\omega$ such that in the local coordinate, we have the $h$-norm $|e|_h=e^{-\frac \phi 2}$ and hence $|s_n|_{h^n}=|f_n|e^{-\frac {n\phi} 2}$.

We decompose the Chern connection  $\nabla=\nabla'+\nabla''$ of the Hermitian line bundle $(L^n,h^n)$  into holomorphic and antiholomorphic parts where in the local coordinate $\nabla'=d_z+n\d\log h$ and $\nabla''=d_{\bar z}$ \cite{GH}.

We can define an inner product on $H^0(M,L^n)$ as,
\begin{equation}\label{innera}\langle s^n_1, s^n_2\rangle_{h^n}=\int_M h^n(s^n_1, s^n_2)dV\end{equation}
 where $dV=\frac{\omega^m}{m!}$ is the volume form. We recall that  throughout the article we assume the volume is normalized as $\int_M dV=1$.

We write  this in the local coordinates,

\begin{equation}\label{innera2}\langle s^n_1, s^n_2\rangle_{h^n}=\int_Mf_1\bar f_2 e^{-n\phi}dV\end{equation}
where
$s_1^n=f_1^ne^{\otimes n}$ and $s_2^n=f_2^ne^{\otimes n}$.

\subsection{Gaussian measures}

Recall that a Gaussian measure on $\R^n$ is a measure of the form
$$\ga_\De = \frac{e^{-\half\langle\De\inv
x,x\rangle}}{(2\pi)^{n/2}\sqrt{\det\De}}  dx_1\cdots dx_n\,,$$ where $\De$
is a positive definite symmetric $n\times n$ matrix.  The matrix $\De$ gives
the second moments of $\ga_\De$:
\begin{equation}\label{moments}\langle x_jx_k
\rangle_{\ga_\De}=\De_{jk}\,.\end{equation} This
Gaussian measure is
also characterized by its Fourier transform
\begin{equation}\label{muhat}\wh{\ga_\De}(t_1,\dots,t_n) = e^{-\half\sum
\De_{jk}t_jt_k}\,.\end{equation}
If we let $\De$ be the $n\times n$ identity matrix, we obtain the standard
Gaussian measure on $\R^n$,
$$\ga_n:=\frac{1}{(2\pi)^{n/2}}e^{-\half |x|^2} dx_1\cdots dx_n\,,$$
with the
property that the
$x_j$ are independent Gaussian variables with mean 0 and variance 1.

 A complex  Gaussian measure on $\C^k$ is a measure of the
form
\begin{equation}\label{cxgauss}\ga_\De= \frac{e^{-\langle\De\inv
z,\bar z\rangle}}{\pi^{k}{\det\De}} dz\,,\end{equation}
where $dz$ denotes Lebesgue measure on $\C^k$, and $\De$ is a
positive definite Hermitian $k\times k$ matrix.  The  matrix
$\De=\big(\De_{\al\be}\big)$ is the covariance matrix of $\ga_\De$:
\begin{equation}\label{covar}
\langle
z_\al\bar z_\be\rangle_{\ga_\De} =\De_{\al\be},\quad\quad 1\leq \al,\be\leq
k\;.\end{equation}

\section{\label{GAUSS} A one-parameter family of Gaussian measures and their critical point distributions}

The one-parameter family of complex Gaussian measures \eqref{HGMg}  on $H^0(M, L^n)$ may be written formally as
$$d\gamma^n_{\alpha} =(\frac{ \alpha}{\pi}) ^{d_n} e^{- \alpha ||s||^2} D s $$ where $Ds$ is Lebesgue measure.

The normalization comes
from the calculation
\begin{equation} \label{GSPH} \begin{array}{lll} 1 =\frac{ \alpha^{d_n}}{\pi^{d_n}} \int_{\C^{d_n}} e^{- \alpha ||z||^2} d z &= & \frac{1}{\pi^{d_n}}
  \int_{\C^{d_n}} e^{-  ||z||^2} d z \\&&\\&&=  \frac{\omega_{2d_n}}{2\pi^{d_n}} \int_0^{\infty} e^{- \rho} \rho^{d_n -1} d \rho'
=  \frac{\omega_{2d_n}}{2\pi^{d_n}}  \Gamma(d_n). \end{array}  \end{equation}
where $\omega_k= \frac{2\pi^{k/2}}{\Gamma(k/2)}$ is the surface measure of the unit sphere $S^{k-1} \subset \R^{k}$.
%We recall Stirling's asymptotic
%$$\Gamma(n + 1) \simeq  \sqrt{2 \pi} n^n e^{-n} n^{\half}$$
%and that the volume of $S^{k-1}$ is
%$$|S^{k-1}| = 2 \frac{\pi^{k/2}}{\Gamma(k/2)}. $$

 As in \eqref{expectdensity}
we denote the normalized density of critical values with respect to $\gamma_n^{\alpha}$ by
\begin{equation} \label{CVALPHA} \E_n^{\alpha} CV_s \;  = \; \D_n^{\alpha}(x) dx. \end{equation}

We note that $\D_n^{\alpha} dx$ is not a  probability measure on $\R_+$ since $CV_s$ is not in general
a probability measure. The mass of the measure can be determined from the eventual Kac-Rice formulae
of \S \ref{KR}.

\subsection{Normalized Hemitian Gaussian measure}

 As mentioned in the introduction, when $\alpha = d_n$ it's the normalized Gaussian measure \eqref{HGM} which
is characterized by
 \begin{equation}\label{standard} \E  a^n_k=0,\,\,\,\E a^n_k\bar a^n_j=\frac 1{d_n}\delta_{kj},\,\,\, \E a^n_ka^n_j=0\end{equation}
 Under this normalization,  the expected $L^2$ norm of $s_n$ is $1$ \eqref{dn}.
%  \begin{equation}\label{norm1}\E\|s_n\|^2_{h^n}= 1.\end{equation}
%The calculations in \S \ref{KRSECT} are with respect to this normalization.

\subsection{Relations between Gaussian critical value densities}

Next we compare densities \eqref{CVALPHA}  as $\alpha$ changes.
The first step is

\begin{lem}  \label{r} For any $s \in H^0(M, L^n)$ with non-degenerate critical points, and for any $r > 0$,
$$CV_{r s} = CV_s(r^{-1} x). $$

\end{lem}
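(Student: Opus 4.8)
The plan is to reduce the assertion to two elementary homogeneity properties, one for the critical \emph{set} and one for the critical \emph{values}. First I would use that the Chern connection is $\C$-linear in the section, so that $\nabla'(r s_n) = r\,\nabla' s_n$ for every scalar $r$; hence for $r\neq 0$ the critical sets coincide,
$$\{z\in M : \nabla'(r s_n)(z) = 0\} = \{z\in M : \nabla' s_n(z) = 0\}.$$
Since $s_n$ is assumed to have non-degenerate (hence isolated) critical points and $M$ is compact, this common set is finite, so both $CV_s$ and $CV_{rs}$ are well-defined finite sums of point masses carrying the same number of atoms and the same normalization $n^{-m}$.

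Second, I would use that the pointwise Hermitian norm is positively homogeneous of degree one: for $r>0$,
$$|r s_n(z)|_{h^n} = r\,|s_n(z)|_{h^n}.$$
Thus at each common critical point $z$ the atom $\delta_{|s_n(z)|_{h^n}}$ appearing in $CV_s$ is replaced in $CV_{rs}$ by $\delta_{r|s_n(z)|_{h^n}}$, the weight being unchanged. In other words, every atom of $CV_s$ sitting at a value $c$ is transported to $rc$.

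It then remains only to recognize this atom-by-atom dilation as the claimed identity. Pairing against a test function $\psi\in C_0(\R_+)$ and writing $D_r(x) = rx$ for the dilation, I would verify
$$\langle CV_{rs},\psi\rangle = \frac{1}{n^m}\sum_{z:\,\nabla' s_n(z)=0}\psi\big(r|s_n(z)|_{h^n}\big) = \langle CV_s,\psi\circ D_r\rangle,$$
which says precisely that $CV_{rs} = (D_r)_* CV_s$ is the push-forward of $CV_s$ under $x\mapsto rx$; this is exactly the substitution $x\mapsto r^{-1}x$ in the argument of $CV_s$ recorded in the statement.

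There is no genuine analytic obstacle here; the one point demanding care is purely a matter of normalization convention. At the level of the empirical measures the dilation is an honest push-forward of point masses and carries no Jacobian, as computed above. When one passes to expectations and represents $\E_n^\alpha CV_s$ or $\E_{\nu_n}CV_s$ by Lebesgue densities $\D_n$, the same change of variable produces a Jacobian factor $r^{-1}$; I would make this explicit so that the degree-one scaling of values propagates correctly to the density scaling $\D_n^\alpha(x) = \alpha^{\half}\D_n^1(\alpha^{\half}x)$ of Lemma \ref{alpha} (obtained by taking $r = \alpha^{-\half}$).
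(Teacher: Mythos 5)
Your proof is correct and follows essentially the same route as the paper's: both pair $CV_{rs}$ against a test function, use that the critical set of $rs$ equals that of $s$ (linearity of $\nabla$) and that $|rs|_{h^n}=r|s|_{h^n}$, and identify the result as the dilation $x\mapsto r^{-1}x$ in the argument of $CV_s$. You merely spell out explicitly the two homogeneity facts and the push-forward interpretation that the paper's two-line proof leaves implicit.
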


\begin{proof}  For any $f \in C(\R)$ we have,
$$\langle CV_{r s} , f \rangle = \sum_{z: \nabla s(z) = 0} f(r |s(z)|_{h^n}) = \langle CV_s(x), f(r x) \rangle. $$
Changing variables to $y = rx$ completes the proof.

\end{proof}

Since $\D_n^{\alpha} dx$ transforms by the inverse dilation, the density has the transformation law,

\begin{lem}\label{alpha}
$\D_n^{\alpha}(x) =  \alpha^{\half} \D_n^1(\alpha^{\half} x) $.
\end{lem}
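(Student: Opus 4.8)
The plan is to derive the scaling law in Lemma \ref{alpha} from the scaling law for the measures in Lemma \ref{r} together with the change-of-variables relation between the Gaussian ensembles $\gamma_\alpha^n$ for different values of $\alpha$. The key observation is that the normalization conditions \eqref{law} show that the ensemble $\gamma_\alpha^n$ is obtained from the ensemble $\gamma_1^n$ simply by rescaling the random section: if $s$ is distributed according to $\gamma_1^n$, then $\alpha^{-\half} s$ is distributed according to $\gamma_\alpha^n$, since multiplying the coefficients by $\alpha^{-\half}$ multiplies the covariance $\E a_k \bar a_j = \delta_{kj}$ by $\alpha^{-1}$, exactly matching \eqref{law}.

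**First I would** make this distributional identity precise. Writing $s = \sum_j a_j^n s_j^n$ with $a_j^n$ standard complex Gaussians (the $\alpha = 1$ case), the section $\alpha^{-\half} s$ has coefficients $\alpha^{-\half} a_j^n$, whose covariance is $\alpha^{-1}\delta_{kj}$, so $\alpha^{-\half} s \sim \gamma_\alpha^n$. Equivalently, if $s \sim \gamma_\alpha^n$ then $\alpha^{\half} s \sim \gamma_1^n$.

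**Next I would** combine this with Lemma \ref{r}, taking $r = \alpha^{\half}$. For any test function $f \in C_0(\R_+)$,
\begin{align*}
\int_{\R^+} f(x)\, \D_n^\alpha(x)\, dx
&= \E_n^\alpha \langle CV_s, f \rangle
= \E_n^1 \langle CV_{\alpha^{-\half} s}, f \rangle.
\end{align*}
Applying Lemma \ref{r} with $r = \alpha^{-\half}$ gives $CV_{\alpha^{-\half} s} = CV_s(\alpha^{\half} x)$, so the pairing becomes $\langle CV_s(x), f(\alpha^{-\half} x)\rangle$, and taking the $\gamma_1^n$-expectation yields
\begin{align*}
\int_{\R^+} f(x)\, \D_n^\alpha(x)\, dx
= \int_{\R^+} f(\alpha^{-\half} x)\, \D_n^1(x)\, dx.
\end{align*}
Substituting $y = \alpha^{-\half} x$ (so $x = \alpha^{\half} y$, $dx = \alpha^{\half} dy$) gives $\int_{\R^+} f(y)\, \alpha^{\half}\, \D_n^1(\alpha^{\half} y)\, dy$. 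Comparing the two expressions for arbitrary $f$ forces $\D_n^\alpha(x) = \alpha^{\half} \D_n^1(\alpha^{\half} x)$, as claimed.

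**The only delicate point**, and the step I would be most careful about, is tracking the two rescalings consistently: one factor of $\alpha^{\half}$ comes from converting between the Gaussian ensembles (via the distributional identity $\alpha^{-\half} s \sim \gamma_\alpha^n$), and a second, partially cancelling factor comes from the Jacobian of the change of variables on $\R_+$. It is easy to misplace a power of $\alpha^{\half}$ or invert a dilation. Apart from this bookkeeping the argument is entirely formal, and the fact that $CV_s$ need not be a probability measure causes no trouble since the identity is tested against arbitrary compactly supported $f$.
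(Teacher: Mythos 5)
Your proposal is correct and takes essentially the same approach as the paper: your distributional identity $\alpha^{-\half} s \sim \gamma^n_{\alpha}$ for $s \sim \gamma^n_{1}$ is exactly the paper's change of variables $s \to s' = \alpha^{\half} s$ in the explicit Gaussian integral, and both arguments then conclude by applying Lemma \ref{r} with $r = \alpha^{-\half}$ together with the Jacobian of the dilation $y = \alpha^{-\half} x$ on $\R_+$. The bookkeeping of the two factors of $\alpha^{\half}$ that you flag as delicate is handled in the paper in precisely the same way, so there is no gap.
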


\begin{proof} We have,
$$\begin{array}{lll} \D_n^{\alpha}(x) dx & = & \E^n_{\alpha} CV_s = (\frac{\alpha}{\pi})^{d_n} \int_{H^0(M, L^n)}  CV_s(x) e^{- \alpha ||s||^2} D s\\&&\\
& = &  (\frac{\alpha}{\pi})^{d_n}  \int_{H^0(M, L^n)}  CV_s(x) e^{-  ||\alpha^{\half} s||^2} D s
\\&&\\
& = &  \frac{1}{\pi^{d_n}} \int_{H^0(M, L^n)}  CV_{\alpha^{-\half} s'} (x) e^{-  ||s'||^2} D s'\\&&\\
& = &  \frac{1}{\pi^{d_n}} \int_{H^0(M, L^n)}  CV_{s'} (\alpha^{\half} x) e^{-  ||s'||^2} D s' = \alpha^{\half} \D_n^1(\alpha^{\half} x ) dx, \end{array} $$
where we change variables $s \to  s'= \alpha^{-\half} s$ and apply Lemma \ref{r}.
\end{proof}

Of course this implies that $\D^{\alpha}_n(x) dx$ all have the same mass.
Since $\alpha = d_n$ in the normalized Gaussian measure, we have:
\begin{cor} \label{STANDARDCV} $ \D_n^{d_n}(x) = d_n^{\half} \;  \D_n^1(d_n^{\half} x) $
\end{cor}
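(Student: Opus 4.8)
The plan is to read this identity off as the single specialization $\alpha = d_n$ of the general scaling law already established in Lemma \ref{alpha}. Recall from \eqref{HGM} and the surrounding discussion that the normalized Gaussian measure $\mu_h^n$ is precisely the member $\gamma_\alpha^n$ of the one-parameter family with $\alpha = d_n$, this value being chosen so that $\E_n^{d_n}\|s_n\|^2_{h^n} = 1$ by \eqref{dn1}. Hence $\D_n^{d_n}$ is by definition the density $\D_n^\alpha$ evaluated at $\alpha = d_n$, and there is nothing to establish beyond substitution.

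Concretely, Lemma \ref{alpha} asserts $\D_n^\alpha(x) = \alpha^{\half} \D_n^1(\alpha^{\half} x)$ for every $\alpha > 0$. Setting $\alpha = d_n$ yields $\D_n^{d_n}(x) = d_n^{\half} \D_n^1(d_n^{\half} x)$, which is exactly the claim. If instead one wanted a self-contained derivation bypassing Lemma \ref{alpha}, one could simply rerun its change-of-variables computation with $\alpha = d_n$ fixed from the outset: express $\E_n^{d_n} CV_s$ using the Gaussian \eqref{HGM}, substitute $s' = d_n^{-\half} s$ to turn the weight $e^{-d_n\|s\|^2}$ into the $\alpha = 1$ weight $e^{-\|s'\|^2}$, and then invoke the homogeneity $CV_{rs} = CV_s(r^{-1}x)$ of Lemma \ref{r} with $r = d_n^{-\half}$. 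This reproduces the same dilation factor $d_n^{\half}$.

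I expect no genuine obstacle at this step: the entire content sits in Lemma \ref{alpha} (and, behind it, the homogeneity of Lemma \ref{r}), while the corollary is merely the numerical instance relevant to the $L^2$-normalized ensemble. Its role is bookkeeping—it records the exact rescaling that converts between the reference ensemble $\alpha = 1$ and the normalized ensemble $\alpha = d_n$ appearing in Theorem \ref{maintheorem}—so the only care required is to keep track of the factor $d_n^{\half}$ and the argument dilation $x \mapsto d_n^{\half} x$ consistently.
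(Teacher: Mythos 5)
Your proposal is correct and matches the paper exactly: the paper obtains Corollary \ref{STANDARDCV} by the same one-line substitution $\alpha = d_n$ into the scaling law of Lemma \ref{alpha}, which itself rests on the homogeneity $CV_{rs} = CV_s(r^{-1}x)$ of Lemma \ref{r}. Nothing further is needed.
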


\section{\label{KRSECT} Kac-Rice formula for the Gaussian critical point density}

The main result of this section (Lemma \ref{KR}) gives a genric Kac-Rice formula for the expected density $\D_n^{d_n}$
of critical points with respect to the normalized Gaussian measure. Very general Kac-Rice formulae applicable to the critical point density in our setting
are given in \cite{BSZ}.  Other presentations can be found in \cite{AT,N1}. The historical references
are  \cite{K,R}.

The Kac-Rice formula is as follows, let $f(t)$ be a real valued smooth stochastic process on the finite interval $I\subset \R$.  Then
the expected number zeros, \begin{equation}\label{kr}\E\#\{t\in I: \,\,f(t)=0\}=\int_I \int_\R |y|p_t(0,y)dxdt  \end{equation}
where $p_t(0,y)$ is the joint density of $(f, f')$ evaluated at $(0,y)$, $dy$ and $dt$ are Lebesgue measures on $\R$. If $f$ is a Gaussian process, then the joint  density $p_t(x,y)$ is uniquely determined by the covariance matrix of $(f,f')$ \cite{BSZ,AT}.

\subsection{Kac-Rice formula for the critical point density}
In this subsection, we will derive the Kac-Rice formula for the expected density $\D_n^{d_n}$ of critical values of $\sn$
with respct to $\gamma_n^{d_n}$. As we will show this particular Gaussian density has a limit as $n \to \infty$.
The formula may be derived from \cite{BSZ, DSZ1} but we take advantage of some simplifications to speed up the proof.
To simplify notation, we write $$\D_n := \D_n^{d_n}. $$

In the local coordinate $U\cong\C^m$ and a local trivialization of $L$, we write the normalized Gaussian random sections as, \begin{equation}
\label{snfn} s_n=f_ne^{\otimes n}. \end{equation} where \begin{equation}\label{fn}f_n=\sum_{j=1}^{d_n}a_jf_j^n\end{equation}
 where $\{a_j\}$ are normalized  Gaussian random variable \eqref{standard} and locally $\{s_j^n=f_j^ne^{\otimes n},\,\, j=1,\cdots, d_n\}$ is an orthogonal basis of $H^0(M,L^n)$ with respect to the inner product \eqref{innera}.

 The smooth Chern connection
then has the form \cite{GH}, \begin{equation}\nabla's_n=(f'_n-n\d\phi f_n)e^{\otimes n}.\end{equation}
Here, $d = \partial + \dbar$ is the decomposition into terms of type $(1,0)$, resp. $(0,1)$. Here, $f_n' = \partial f
= \sum_j \frac{\partial f}{\partial z_j} dz_j$.
Thus, in the local coordinate, the empirical measure \eqref{criticalmeasure}
%\begin{equation}\mathcal C_n=\frac 1 {n^m}\sum_{z:\,\,\nabla' s_n=0}\delta_{\sn}\end{equation}
 has the form
\begin{equation}CV_s=\frac 1 {n^m}\sum_{z\in \Omega}\delta_{|f_n|e^{-\frac{n\phi}2}},\end{equation}
where
\begin{equation}\Omega=\{z\in \C^m:\,\,(f'_n-n\d\phi f_n)e^{-\frac{n\phi}2}=0\}.\end{equation}

We also introduce the locally defined empirical measure of complex
critical values
\begin{equation} \widehat{CV}_s= \frac 1{n^m}\sum_{z\in \Omega}\delta_{f_n \eh}.\end{equation}
 We will determine the  expected  density $\widehat{\D}_n$ of $\widehat{CV}_s$ on $\C$ and then integrate out the angle
variable  $\int_0^{2\pi}\widehat{\D}_n(|x|, \theta)|x|d\theta$ to obtain the (global) expected density of $CV_s$.
In other words, we use that $\langle \psi,  CV_s \rangle = \langle \psi, \widehat{ CV}_s \rangle$
for radial functions $\psi$ and thus, $CV_s = \pi_*\widehat{ CV}_s$ where $\pi: \C^* \to \R_+$
the map $z \to |z|$.

The result is:

\begin{lem} \label{KR}The expected distribution of critical values $ CV_s$ is given by the formula,
\begin{equation}\D_n(x)=\frac 1{n^m}\int_0^{2\pi} \int_{M}\int_{S(\C^m)}p^n_{z}(x,\theta,0,\xi)\left|\det\left(\xi\xi^*-n^2I x^2\right)\right|xd\xi  dV d\theta\end{equation}
where $\xi\in S(\C^m)\cong \C^{\frac{m(m+1)}2}$ is the space of $m\times m$  complex symmetric matrices and $d\xi$ is the Lebesgue measure on $S(\C^m)$,
$p^n_{z}(x,\theta,0,\xi)$ is the joint density of $p^n_{z}(y,0,\xi)$ of normalized Gaussian random variables $(f_n, f_n', f_n'')$ evaluate at $f_n'=0$, here we substitute $y:=xe^{i\theta}$ by the map $\pi$. (The formula of $p^n_z(y,0,\xi)$
is given explicitly in Lemma \ref{pz}).

\end{lem}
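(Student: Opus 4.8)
The plan is to obtain the formula from the general Kac-Rice formula \eqref{kr}, applied to the critical-point field $\nabla' s_n$ viewed locally as a map $\C^m\to\C^m$ whose zero set is exactly $\Omega$. I would first compute the expected \emph{complex} value density $\wh\D_n$ of $\wh{CV}_s$ on $\C$, and then pass to the radial density $\D_n$ by pushing forward under $z\mapsto|z|$: writing $y=xe^{i\theta}$, for radial test functions one has $\int_\C \psi(|y|)\,\wh\D_n(y)\,dy=\int_0^\infty\psi(x)\big(\int_0^{2\pi}\wh\D_n(xe^{i\theta})\,d\theta\big)\,x\,dx$, so that $\D_n(x)=x\int_0^{2\pi}\wh\D_n(xe^{i\theta})\,d\theta$, which accounts for the outer $\int_0^{2\pi}d\theta$ and the polar Jacobian factor $x$ in the statement.

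Next I would reduce everything to a local jet computation at a fixed $z\in M$. Since the integrand depends only on the $2$-jet of $s_n$ at $z$, I am free to work in K\"ahler normal coordinates centered at $z$ together with a local holomorphic frame in which $\phi(z)=0$, $\d\phi(z)=0$ and $\d\d\phi(z)=0$, while $\d\dbar\phi(z)=\om(z)$. In such coordinates the Chern connection terms drop at $z$, so $\nabla' s_n|_z=f_n'(z)\,e^{\otimes n}$ and the condition $\nabla' s_n=0$ becomes simply $f_n'(z)=0$. This is exactly why the joint density in the statement is evaluated at $f_n'=0$ rather than at the full $v:=f_n'-n\d\phi\,f_n=0$, and why the value is $\sn=|f_n(z)|$ at $z$.

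The crux is the Jacobian computation. Differentiating the map $v=f_n'-n\d\phi\,f_n:\C^m\to\C^m$ gives a real-linear map with holomorphic part $A=\d v$ and antiholomorphic part $B=\dbar v$. At a critical point in the chosen coordinates the holomorphic Hessian reduces to $A=f_n''=:\xi$, a complex \emph{symmetric} matrix since $s_n$ is holomorphic, while the antiholomorphic part is produced entirely by the curvature: $B=\dbar v|_z=n\,\d\dbar\phi(z)\,f_n=n\,\om(z)\,f_n$, which in normalized coordinates is $nIf_n$. The real Jacobian determinant is then
\begin{equation*}\det\begin{pmatrix}A & B\\ \bar B & \bar A\end{pmatrix}=\det(A)\det(\bar A-\bar B A^{-1}B)=\det\left(\xi\xi^*-n^2|f_n|^2 I\right),\end{equation*}
using $\xi^*=\bar\xi$ (symmetry) and $B=nIf_n$; since $x=|f_n|$ at $z$ this is precisely $\det(\xi\xi^*-n^2Ix^2)$. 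I expect this identity to be the main obstacle: it requires tracking the second covariant derivative of the Chern connection, confirming the symmetry of $\xi$, and pinning down the curvature term that generates the $n^2Ix^2$ summand, all while checking that the chosen frame really does kill the spurious connection terms at $z$.

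With the Jacobian in hand, the multivariate Kac-Rice formula \eqref{kr} for the value-weighted count of zeros of a map $\R^{2m}\to\R^{2m}$ expresses the expected weighted count as the integral over $M$ of the conditional expectation of $|\det(\xi\xi^*-n^2Ix^2)|\,\psi(\sn)$ given $\nabla' s_n=0$, times the density of $\nabla' s_n$ at $0$. Because $(f_n,f_n',f_n'')$ is jointly Gaussian (the coefficients $a_j$ being Gaussian), its joint density $p^n_z$ — computed in Lemma \ref{pz} from the covariance kernel \eqref{SZ} — simultaneously supplies the conditioning density and the conditional law of $(f_n,\xi)$. Substituting, integrating against $\psi$, and finally changing to polar coordinates $y=xe^{i\theta}$ as above yields the stated formula, with the prefactor $1/n^m$ carried through from the definition \eqref{criticalmeasure}. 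The non-degeneracy and integrability hypotheses needed to legitimize the Kac-Rice step follow from the general results of \cite{BSZ,AT}.
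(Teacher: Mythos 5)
Your proposal is correct and follows essentially the same route as the paper: a delta-function/Kac--Rice computation of the complex density $\wh\D_n$ of $\widehat{CV}_s$, the Jacobian identity $\left|\det\left(\xi\xi^*-n^2|f_n|^2I\right)\right|$ at critical points, reduction via K\"ahler normal coordinates and adapted frames, and passage to polar coordinates $y=xe^{i\theta}$. The only differences are organizational (the paper computes the Jacobian of $q_n$ in general coordinates, keeping all connection terms, and only then specializes to normal coordinates, whereas you invoke coordinate invariance upfront), plus an immaterial sign on $B=\dbar v|_z=-n\,\d\dbar\phi\, f_n$, which drops out since only the product $\bar B B = n^2|f_n|^2 I$ enters the determinant.
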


\begin{proof}

We first introduce some notations: $$p_n=f_n\eh \in\C,\,\,\, q_n=(f_n'-n\d\phi f_n)\eh\in \C^m,\,\,\,r_n=f_n''\eh\in S(\C),$$ then
$p_n$, $q_n$ and $r_n$ are all complex Gaussian random variables.

By definition of the delta function, we have for any test functions $\psi\in C_0^\infty(\C)$,
\begin{align*}&\langle \frac 1{n^m}\sum_{z\in \Omega}\delta_{f_n\eh},\psi\rangle\\
=&\frac 1{n^m}\sum_{z:\,\,\, q_n(z)=0}\psi(p_n)\\=&\frac 1{n^m}\int_{\C^m} \delta_0(q_n)\psi(p_n) dq_n\wedge d\bar{q}_n\\=&\frac 1{n^m}
\int_{\C^m} \delta_0(q_n)\psi(p_n)\left|\det\left(|\frac{\d q_n}{\d z}|^2-|\frac{\partial q_n}{\d\bar z}|^2\right)\right|dz
\end{align*} where $dz $ is the Lebesgue measure on $\C$.
By direct computations, we have,
\begin{align*}\frac{\d q_n}{\d z}&=(f_n''-n\d\phi f_n'-n\d^2\phi f_n)\eh-\frac n 2 \d\phi q_n \\
&=f_n''\eh-n\d\phi q_n^*+n^2 \d\phi \d\phi^*p_n-n\d^2\phi p_n-\frac n 2 \d\phi q_n^*
\end{align*}
and
$$\frac{\d q_n}{\d \bar z}=-n\d\dbar \phi p_n-\frac n 2 \dbar\phi q_n^* $$
where $\frac{\d q_n}{\d z}$ and $\frac{\d q_n}{\d \bar z}$ are $m\times m$ symmetric matrices; for simplicity, for any matrix $A$ we denote, $$|A|^2:=AA^*$$

By taking expectation on both sides, we have,
\begin{align*}&\E\langle \frac 1{n^m}\sum_{z\in \Omega}\delta_{f_n\eh},\psi\rangle \\
=& \frac 1{n^m}
\E\int_{\C_z^m} \delta_0(q_n)\psi(p_n)\left|\det\left(|\frac{\d q_n}{\d z}|^2-|\frac{\partial q_n}{\d\bar z}|^2\right)\right|dz\\
=&\frac 1{n^m} \int_{\C_z^m}\int_{\C_y}\int_{S_\xi(\C^m)}  \psi(y) p_z(y,0,\xi) \left|\det\left(|\xi+n^2 \d\phi\d\phi^*y-n\d^2\phi y|^2-n^2|\d\dbar \phi|^2 |y|^2 \right)\right|
d\xi  dz  dy
\end{align*}
where $p_z(y,s,\xi)$ is the joint probability of the Gaussian random field $(p_n, q_n, r_n)$ and $dx$ is the Legesgue measure on $\C$.

Thus, the expected density $\E \widehat{CV}_s$ is,
\begin{equation}\label{d1}\widehat\D_n(y)=\frac 1{n^m} \int_{\C^m}\int_{S(\C^m)}p_z(y,0,\xi)\left|\det\left(|\xi+n^2 \d\phi\d\phi^*y-n\d^2\phi y|^2-n^2|\d\dbar \phi|^2 |y|^2 \right)\right|d\xi dz\end{equation}
We rewrite $y$ in  polar coordinate $(x,\theta)$. Then the expected density of $CV_s$ is,
\begin{equation}\label{d2}\D_n(x)=\int_0^{2\pi} \widehat \D_n(x,\theta)xd\theta\end{equation}

The next step is to get an explicit geometric formula for  $\D_n(x)$.
The empirical measure $CV_s$ and its average  $\D_n$ are independent of coordinates and frames on the \kahler manifold $M$ and line bundle $L^n$. We  may  choose  \kahler normal coordinate to simplify the above integral.

We freeze at a point $z_0$ as the origin of the coordinate patch to simplify the integrand at $z_0$. It is well known that in terms of \kahler normal coordinates $\{z_j\}$,
the K\"ahler potential $\phi$ has the expansion in the neighborhood of $z_0$: \begin{equation}\label{kahler}\phi(z,\bar z)= \|z\|^2 -\frac 14 \sum R_{j\bar
kp\bar q}(z_0)z_j\bar z_{\bar k} z_p\bar z_{\bar q} +
O(\|z\|^5)\;.\end{equation}In general, $\phi$ contains a
pluriharmonic term $f(z) +\overline{f(z)}$, but  a change of frame for $L$
eliminates that term up to fourth order.
Thus \begin{equation}\label{z0}\d\phi(z_0)=0,\, \d^2\phi(z_0)=0,\,\,\d\dbar\phi(z_0)=1,\,\, dV(z_0)=dz .\end{equation} Such frames are called adapted in \cite{BSZ, DSZ2}.
Hence, the joint density of $(p_n, q_n, r_n)$ at $z_0$ is the same as the joint density of Gaussian process $(f_n, f_n', f_n'')$.

Thus by (\ref{d1})(\ref{d2})(\ref{z0}), we obtain the global expression,
$$\D_n(x)=\frac 1{n^m}\int_0^{2\pi} \int_{M}\int_{S(\C^m)}p^n_{z}(x,\theta,0,\xi)\left|\det\left(|\xi|^2-n^2 I x^2\right)\right|xd\xi  dV d\theta$$
which completes the proof.
\end{proof}

\section{\label{JPD} Calculation of the joint probability density}
In this section we calculation the joint probability distribution $p^n_z$
of Lemma \ref{KR} with respect to the normalized Gaussian measure $\gamma_n^{d_n}$.

\subsection{Density $p_z(x,s,\xi)$}
In this subsection, we will derive the formula for $p^n_z(y,s,\xi)$ of the joint density of the Gaussian process $(f_n,f_n',f_n'')$.
It is  given by the formula \cite{BSZ,AT},
\begin{equation}p^n_z(y,s,\xi) =\frac 1{\pi^{d_m}}\frac 1{\det\Delta^n_z}\exp\left\{-\left\langle \begin{pmatrix}y\\ s \\ \xi \end{pmatrix}, (\Delta^n_z)^{-1}\begin{pmatrix}\bar y\\ \bar s \\ \bar \xi \end{pmatrix}\right \rangle\right\}
\end{equation}
where $\frac {(m+1)(m+2)}2$ is the dimension of the Gaussian process $(f_n,f_n',f_n'')$ and $\Delta^n_z$ is the covariance matrix of this process.

We rearrange the order the Gaussian process and write $\tilde\Delta^n_z$ as the covariance matrix of $(f_n', f_n'', f_n)$, then we rewrite,
\begin{equation}p_z^n(y,s,\xi) =\frac 1{\pi^{d_m}}\frac 1{\det\tilde\Delta^n_z}\exp\left\{-\left\langle \begin{pmatrix}s\\ \xi\\y \end{pmatrix}, (\tilde\Delta^n_z)^{-1}\begin{pmatrix}\bar s\\ \bar \xi  \\ \bar y\end{pmatrix}\right \rangle\right\}
\end{equation}
The  covariance kernel is defined by  \begin{equation}\label{cov}\Pi_n(z,w):=\sum_{j=1}^{d_n}f_j^n(z)\bar f^n_j(w). \end{equation} where $\{f_j^n\}$ is defined in \eqref{snfn}.

Then, we have \begin{equation}\label{wes}\E(f_n(z)\bar f_n(w))=\frac 1 {d_n}\Pi_n(z,w)\end{equation}
The notation $\Pi_n(z,w)$ usually refers to the Szeg\"o kernel but in fact \eqref{wes} is the Bergman kernel, which
has the pointwise TYZ expansion \cite{L,T,Ze},
\begin{equation} \label{bergman}\Pi_n(z, z) = n^m e^{n \phi(z)} \left[1+a_1(z) n^{-1}
+a_2(z) n^{-2}+\cdots\right]\,,\end{equation}
where $a_1$ is the scalar curvature.
Integrating over $M$ with respect to $e^{- n \phi} dV$  gives the well-known dimension polynomial,
\begin{equation}\label{rock}
d_n=n^m(1+n^{-1}\int_M a_1dV+n^{-2}\int_M a_2 dV+\cdots)\end{equation}

The covariance matrix is then given by,
\begin{equation}\label{Delta}
\tilde \Delta^n_z=\begin{pmatrix} A_n  &B_n\\ B_n^* & C_n\end{pmatrix}\;,\end{equation}
where
\begin{equation}\label{a}A_n=\frac 1 {d_n}\frac{\d^2 \Pi_n(z,w)}{\d z\d \bar w}|_{z=w}\end{equation}
is an $m\times m$ matrix;

\begin{equation}\label{b}B_n=\frac 1 {d_n}\begin{pmatrix}
 \frac{\d^3 \Pi_n(z,w)}{\d z\d^2 \bar w}|_{z=w}&\frac{\d \Pi_n(z,w)}{\d z}|_{z=w}\end{pmatrix}\end{equation}
is an $m\times \frac {m^2+m+2}{2}$ matrix;

\begin{equation}\label{c}C_n=\frac 1 {d_n}\begin{pmatrix}
\frac{\d^4 \Pi_n(z,w)}{\d^2 z\d^2\bar w}|_{z=w}&\frac{\d^2 \Pi_n(z,w)}{\d^2 z}|_{z=w}
\\\frac{\d^2 \Pi_n(z,w)}{\d^2 \bar w}|_{z=w}
&\Pi_n(z,z)
\end{pmatrix}\end{equation}
is a $\frac {m^2+m+2}{2}\times \frac {m^2+m+2}{2}$ matrix.

Thus, we have
\begin{lem} \label{pz}
With the above notations,
\begin{equation}\label{denty}p^n_z(y,0,\xi) =\frac 1{\pi^{d_m} }\frac 1{\det A_n \det \Lambda_n}\exp\left\{-\left\langle \begin{pmatrix} \xi  \\y\end{pmatrix}, \Lambda_n^{-1}\begin{pmatrix} \bar \xi \\ \bar y \end{pmatrix}\right \rangle\right\},
\end{equation}
where
\begin{equation}\Lambda_n=\frac1{d_n}(C_n-B_n^* A_n^{-1} B_n).\end{equation}

\end{lem}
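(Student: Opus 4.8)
The plan is to obtain Lemma \ref{pz} as a purely linear-algebraic consequence of the general complex Gaussian density formula, exploiting the block structure of the reordered covariance matrix $\tilde\Delta^n_z$ via the Schur complement. The starting point is the density of the $\frac{(m+1)(m+2)}{2}$-dimensional centered complex Gaussian vector $(f_n', f_n'', f_n) = (s, \xi, y)$, written as $\frac{1}{\pi^{d_m}\det\tilde\Delta^n_z}\exp\{-\langle (s,\xi,y),\,(\tilde\Delta^n_z)^{-1}(\bar s, \bar\xi, \bar y)\rangle\}$ with $\tilde\Delta^n_z = \begin{pmatrix} A_n & B_n \\ B_n^* & C_n\end{pmatrix}$. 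Here the upper-left block $A_n$ is exactly the covariance of the gradient $f_n'$, which is the coordinate we set to zero, while the lower-right block $C_n$ is the covariance of the pair $(f_n'', f_n)$. The whole point is that evaluating at $s = 0$ should collapse both the quadratic form and the determinant down to the $(f_n'', f_n)$ sector.

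First I would invert $\tilde\Delta^n_z$ in block form. Writing $(\tilde\Delta^n_z)^{-1} = \begin{pmatrix} P & Q \\ Q^* & R \end{pmatrix}$ with the same partition, the standard block-inversion formula gives $R = (C_n - B_n^* A_n^{-1} B_n)^{-1}$, the inverse of the Schur complement of $A_n$. Setting $s = 0$ in the exponent annihilates every term involving the $P$ and $Q$ blocks, leaving precisely $-\langle (\xi, y),\, R\,(\bar\xi,\bar y)\rangle$. Thus, identifying $\Lambda_n$ with the Schur complement $C_n - B_n^* A_n^{-1} B_n$, i.e. the conditional covariance of $(f_n'', f_n)$ given $f_n' = 0$, the exponential factor is exactly $\exp\{-\langle(\xi,y),\,\Lambda_n^{-1}(\bar\xi,\bar y)\rangle\}$ as claimed in \eqref{denty}. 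For the prefactor I would use the companion Schur identity for determinants, $\det\tilde\Delta^n_z = \det A_n \cdot \det(C_n - B_n^* A_n^{-1} B_n) = \det A_n \cdot \det \Lambda_n$, so that $\frac{1}{\pi^{d_m}\det\tilde\Delta^n_z}$ becomes $\frac{1}{\pi^{d_m}\det A_n\det\Lambda_n}$.

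The only genuine prerequisites are that $A_n$ is invertible and that $\tilde\Delta^n_z$ is positive definite; these follow from non-degeneracy of the joint Gaussian field $(f_n', f_n'', f_n)$, which holds for the positive line bundle $(L,h)$ once $n$ is large, so that all blocks and the Schur complement are well defined.

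This argument is essentially routine once the blocks are matched correctly, so I do not anticipate a deep obstacle; the one thing requiring care is the bookkeeping of the reordering $(f_n, f_n', f_n'') \mapsto (f_n', f_n'', f_n)$ together with the power of $\pi$ and the $d_n$-scalings packaged into the definitions \eqref{a}--\eqref{c}. In particular one should track how the scalar factors of $\frac{1}{d_n}$ built into $A_n, B_n, C_n$ propagate through the Schur complement into $\Lambda_n$, and confirm consistency between the exponent and the determinant normalizations, since this is precisely the input feeding the Kac-Rice integrand of Lemma \ref{KR} and the subsequent asymptotic analysis.
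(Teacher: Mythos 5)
Your proposal is correct and is essentially the paper's own (largely implicit) argument: the paper states Lemma \ref{pz} as an immediate consequence of the reordered Gaussian density for $(f_n',f_n'',f_n)$ with block covariance $\tilde\Delta^n_z$, which is exactly the Schur-complement/conditional-covariance computation you spell out, including the determinant identity $\det\tilde\Delta^n_z=\det A_n\det\Lambda_n$. The bookkeeping point you flag is real but is a slip in the paper rather than in your argument: since the $\frac{1}{d_n}$ normalizations are already built into the blocks \eqref{a}--\eqref{c}, the internally consistent conclusion is $\Lambda_n=C_n-B_n^*A_n^{-1}B_n$ with no extra $\frac{1}{d_n}$ prefactor (as the exponent and the factor $\frac{1}{\det A_n\det\Lambda_n}$ in \eqref{denty} jointly require), and the paper's later uses of the lemma treat the normalization loosely in the same way.
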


\section{\label{FS} Calculation in the Fubini-Study case}

In this section, we give explicit formulae for the Kac-Rice density of critical points for $SU(m+1)$ polynomials.
 This is
the case where  $M=\CP^m$, where  $L$ is the line bundle $\ocal(1)$  whose sections are linear
functions on $\C^{m+1}$,  and so sections $L^n = \ocal(n)$  are homogeneous polynomials of degree $n$.
We equip $\ocal(1)$ with its Fubini-Study metric $h_\FS$ given
by
\begin{equation}\label{hfs} \|s\|_{h_\FS}([w])=\frac{|(s,w)|}{|w|}\;,
\quad\quad
w=(w_0,\dots,w_m)\in\C^{m+1}\;,\end{equation} for $s\in\C^{m+1*}\equiv
H^0(\C\PP^m,\ocal(1))$, where $|w|^2=\sum_{j=0}^m |w_j|^2$ and
$[w]\in\C\PP^m$
denotes the complex line through $w$. The \kahler form on $\CP^m$ is the
Fubini-Study form
\begin{equation}
\omega_\FS=\frac{\sqrt{-1}}{2}\Theta_{h_\FS}=\frac{\sqrt{-1}}{2}
\ddbar \log |w|^2 \,.\end{equation}

We denote by $\D_n^{SU(m + 1)}$ the density of critical points for $\gamma_n^{d_n}$. Our result is:
\begin{prop} \label{DSUM}
\begin{equation}\D^{SU(m+1)}_n(x)=c_mxe^{-x^2} \int_{S(\C^m)}e^{-|\xi|^2} \left|\det\left(\frac{n-1}{n}|\sqrt P\xi|^2-Ix^2\right)\right|d\xi  \end{equation}
where $c_m$ is as in Theorem \ref{maintheorem}.
\end{prop}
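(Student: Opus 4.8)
The plan is to specialize the general Kac--Rice formula of Lemma \ref{KR} together with the joint density of Lemma \ref{pz} to the homogeneous case $M=\CP^m$, $L=\ocal(1)$ with the Fubini--Study data, and to exploit the fact that on $\CP^m$ the entire integrand is point-independent. Because $\CP^m$ is homogeneous under $\SU(m+1)$ and both $\om_\FS$ and $h_\FS$ are invariant, the covariance matrix $\tilde\Delta^n_z$ of $(f_n,f_n',f_n'')$ in an adapted frame is the same at every $z$; hence $\int_M(\cdots)\,dV$ collapses to $V$ times the value of the integrand at a single base point, which I take to be the origin $z_0=0$ of an affine chart. There it suffices to compute the blocks $A_n,B_n,C_n$ of \eqref{a}--\eqref{c} once and for all from the explicit Bergman kernel.

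First I would write down the exact kernel. In the affine chart with $\phi(z)=\log(1+|z|^2)$, the covariance kernel \eqref{cov} is, up to the normalization fixing $\Pi_n(0,0)=d_n/V$,
\begin{equation*}
\Pi_n(z,w)=\frac{d_n}{V}\,(1+z\cdot\bar w)^n ,
\end{equation*}
so by \eqref{wes} every covariance entry is a diagonal derivative $\frac1{d_n}\d_z^\alpha\d_{\bar w}^\beta\Pi_n|_0$. The crucial structural point is that such a derivative is nonzero only when $|\alpha|=|\beta|$, since $(1+z\cdot\bar w)^n$ has only balanced monomials; equivalently, the cross-correlations between $f_n'$ and $(f_n,f_n'')$ are forced to vanish by $\U(m)$-equivariance, the gradient living in the standard representation and $(f_n,f_n'')$ in the trivial plus $\sym^2$ representations. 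Consequently $B_n=0$, the value--Hessian block of $C_n$ also vanishes, and the surviving blocks are the identity-type gradient covariance $A_n=\frac nV I$, the value variance $\frac1V$, and the Hessian self-covariance $\frac1{d_n}\d_z^2\d_{\bar w}^2\Pi_n|_0=\frac{n(n-1)}{V}P$, where the factor $n(n-1)$ comes from twice differentiating $(1+z\cdot\bar w)^n$ and $P$ is exactly the symmetrizer $(\de_{jj'}\de_{qq'}+\de_{jq'}\de_{qj'})$ on $S(\C^m)$.

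With $B_n=0$ the Schur complement in Lemma \ref{pz} is block diagonal, so $\Lambda_n$ splits into a Hessian block $\propto n(n-1)P$ and a scalar value block, and $p^n_z(x,\theta,0,\xi)$ becomes an explicit centered Gaussian in $(\xi,y)$ whose $\theta$-dependence enters only through $|y|=x$. I would then substitute this into Lemma \ref{KR}: the $\theta$-integral produces a factor $2\pi$, the $M$-integral produces $V$, and the Gaussian weight in $y$ produces $e^{-x^2}$ (with $V=1$ in the exponent, as in the statement). In the remaining integral over $S(\C^m)$ I would make the linear change of variables $\xi=\sqrt{n(n-1)}\,\sqrt P\,\eta$. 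This is tailored to do two things at once: it normalizes the Gaussian weight to $e^{-|\eta|^2}$, and it turns the Kac--Rice determinant $|\det(\xi\xi^*-n^2Ix^2)|$ into $n^{2m}\,|\det(\tfrac{n-1}{n}|\sqrt P\eta|^2-Ix^2)|$, which is precisely the integrand in the statement. Collecting the Jacobian $(n(n-1))^{m(m+1)/2}\det P$ of this substitution together with the prefactor $\pi^{-(m+1)(m+2)/2}(\det A_n\det\Lambda_n)^{-1}$, the $n^{-m}$ in front of Lemma \ref{KR}, and the $n^{2m}$ from the determinant, all powers of $n$ and the factor $\det P$ cancel, leaving exactly the constant $c_m=\frac{2\pi}{\pi^{(m+1)(m+2)/2}}V$ of Theorem \ref{maintheorem}.

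The main obstacle is the exact, non-asymptotic bookkeeping in the last step rather than any conceptual difficulty. One must verify that the cross-terms vanish \emph{identically} (so that $\Lambda_n$ is genuinely block diagonal and the $\sqrt P$-substitution decouples cleanly), keep careful track of the real Jacobian of a complex-linear change of variables on the $\frac{m(m+1)}2$-dimensional space $S(\C^m)$, and confirm that the powers of $n$ coming from $A_n$, from $\det\Lambda_n$, from the Jacobian and from the determinant cancel precisely against the $n^{-m}$ prefactor. The one subtlety worth flagging is that $\xi$ is a symmetric matrix, so the symmetrizer $P$ is what mediates between the Hermitian quadratic form $\langle\xi,P^{-1}\bar\xi\rangle$ controlling the Gaussian and the ordinary matrix determinant $\det(\xi\xi^*-\cdots)$; placing $\sqrt P$ correctly is the only delicate algebraic point. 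Finally, letting $n\to\infty$ so that $\frac{n-1}{n}\to1$ recovers $\D_\infty$, providing a consistency check against Theorem \ref{maintheorem}.
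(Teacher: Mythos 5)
Your proposal is correct and follows essentially the same route as the paper: compute the covariance blocks of $(f_n,f_n',f_n'')$ from the explicit Fubini--Study kernel at an adapted point, observe $B_n=0$ so $\Lambda_n$ is block diagonal with Hessian block $n(n-1)P$, and substitute into Lemmas \ref{KR} and \ref{pz} with the rescaling by $\sqrt{n(n-1)}\,\sqrt P$ (which the paper performs as two successive substitutions $\xi\to\sqrt{n(n-1)}\,\xi$, $\xi\to\sqrt P\,\xi$). The only cosmetic differences are that you justify the vanishing of the cross-covariances by $\U(m)$-equivariance/balanced monomials where the paper simply differentiates $(1+z\bar w)^n$ explicitly, and your Jacobian bookkeeping (which I checked: $n^{-m}\cdot\det A_n^{-1}\cdot n^{2m}=1$ and the Jacobian $(n(n-1))^{m(m+1)/2}\det P$ cancels against $\det\Lambda_n$) matches the paper's.
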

We note that the only difference between $\D_n^{SU(m+1)}$ and the limit $\D_{\infty}$ of Theorem \ref{maintheorem}
is in the limit $\frac{n-1}{n} \to 1$.
In dimesnion $m = 1$,
\begin{equation}\label{resultSU(2)}\D_n^{SU(2)}(x)=
\frac 4{\pi}xe^{-x^2}\int_{0}^\infty e^{-r} \left|\frac{n-1}{n}2r-I x^2\right|dr. \end{equation}

\begin{proof}

The
Szeg\"o kernel for $\ocal(n)$ is given
in an affine chart $Z_0 = 1$   with $z_j = \frac{Z_j}{Z_0}$ by
$$ \Pi_{H^0(\CP^1,\ocal(n))}(z,w) = \frac {n+1}{\pi}
(1+z \bar w)^n e^{\otimes n}(z)\otimes \overline{e^{\otimes n}(w)}\;.$$  Since our formula is invariant when
the \szego kernel is multiplied by a constant, we can replace the
above by the {\it normalized \szego kernel}  \begin{equation}
\label{NSK} \wt \Pi_n(z,w):=(1+z \bar w)^n \end{equation} in our
computation.

Since
$$\phi(z)=-\log|e(z)|^2_\FS=\log (1+|z|^2)\;,$$ we have
$$\phi(0) = \frac {\d \phi}{\d z} (0)
=\frac {\d^2\phi}{\d z^2} (0) = 0\;; $$ i.e., $e_n$ is an adapted
frame at $z=0$. Hence when computing the (normalized) matrices
$B_n,\ C_n$ for $H^0(\CP^1,\ocal(n))$, we can take the usual
derivatives of $\wt\Pi_N$. Indeed, we have
\begin{eqnarray*} \frac{\d\wt\Pi_n}{\d{z}} &=&
n(1+z \bar w)^{n-1}\bar w\;,\\
\frac{\d^2\wt\Pi_n}{\d^2{z}} & = & n(n-1)(1+z \bar w)^{n-2} (\bar w)^2\\
\frac{\d^2\wt\Pi_n}{\d^2 \bar w}& = & n(n-1)(1+z \bar w)^{n-2} z^2\\
\frac{\d^2\wt\Pi_n}{\d{z}\d\bar w} &=& n(1+z\bar w)^{n-1} +
n(n-1)(1+z\bar w)^{n-2}z\bar w\;.\\
\frac{\d^3\wt\Pi_n}{\d{z}\d^2\bar w}& = &2 n(n -1) (1+z\bar w)^{n-2} z +
n(n-1)(n-2)(1+z\bar w)^{n-3}z^2\bar w  \\
\frac{\d^4\wt\Pi_n}{\d^2{z}\d^2\bar w} &=&
2 n(n -1) (1+z\bar w)^{n-2}  +4 n(n -1) (n-2)(1+z\bar w)^{n-3} z \bar{w}\\ && +
n(n-1)(n-2)(n-3)(1+z\bar w)^{n-4}z^2\bar w^2
%+ 2N(N-1)(N-2)(1+z\bar w)^{N-3}z\bar w
\end{eqnarray*}

It follows that
\begin{equation}\label{a2}A_n=\frac 1 {d_n}\frac{\d^2 \Pi_n(z,w)}{\d z\d \bar w}|_{z=w = 0}
= \begin{pmatrix} n\delta_{ij} \end{pmatrix}_{i, j=1}^m
\end{equation}

\begin{equation}\label{b2}B_n=\frac 1 {d_n}\begin{pmatrix}
 \frac{\d^3 \Pi_n(z,w)}{\d z\d^2 \bar w}|_{z=w}&\frac{\d \Pi_n(z,w)}{\d z}|_{z=w}\end{pmatrix}
= \begin{pmatrix} 0 & 0 \end{pmatrix}
\end{equation}
(an $m\times \frac {m^2+m+2}{2}$ matrix);

\begin{equation}\label{c2}C_n=\frac 1 {d_n}\begin{pmatrix}
\frac{\d^4 \Pi_n(z,w)}{\d^2 z\d^2\bar w}|_{z=w}&\frac{\d^2 \Pi_n(z,w)}{\d^2 z}|_{z=w}
\\\frac{\d^2 \Pi_n(z,w)}{\d^2 \bar w}|_{z=w}
&\Pi_n(z,z)
\end{pmatrix} = \begin{pmatrix} 2 n(n-1) I & 0 \\ &\\ 0 & 1
\end{pmatrix}
\end{equation}
(an $\frac {m^2+m+2}{2}\times \frac {m^2+m+2}{2}$ matrix.)

Since $B = 0$,
$$\begin{aligned} \Lambda_n&=\frac{n^{m}}{d_n}(C_n-B_n A_n^{-1}B^*_n) = \frac{n^m}{d^n} C_n\\& =\frac{n^{m}}{d_n} \begin{pmatrix}
 n(n-1) P  &0
\\ 0
&1
\end{pmatrix}
 \end{aligned}$$
where  \begin{equation}\label{p}P:=(\delta_{jj'}\delta_{qq'}+\delta_{jq'}\delta_{qj'})_{\frac{m^2+m}{2}\times \frac{m^2+m}{2}}\end{equation}
Thus \begin{equation} \label{CPL} \det \Lambda_n=(n^2)^{\frac {m^2+m}{2}}\det P,\end{equation}
and
\begin{equation}\label{denty2}p_z(y,0,\xi) =
\frac 1{\pi^{d_m} \det P}\frac 1{n^{m}n^{m^2+m}}e^{-(n(n-1))^{-1}P^{-1}|\xi|^2-|y|^2}
\end{equation}
Write $y$ in the polar coordinate $(x,\theta)$ and combine Lemmas \ref{KR} and \ref{pz}, we have
\begin{equation}\begin{aligned}\D^{SU(m+1)}_n(x)&=\frac 1{\pi^{d_m}}\frac {xe^{-x^2}}{n^{m^2+3m}}\int_0^{2\pi} \int_{M}\int_{S(\C^m)}\frac 1{ \det P}e^{-(n(n-1))^{-1}P^{-1}|\xi|^2}\\& \left|\det\left( |\xi|^2-n^2 x^2\right)\right|d\xi dV d\theta\end{aligned}\end{equation}

Now we change variables $\xi\to \sqrt{n(n-1) }\xi$, apply the assumption $\int_M dV=1$ and integrate $\theta$ variable to get,
\begin{equation}\label{SUM}\D^{SU(m+1)}_n(x)=\frac {2\pi xe^{-x^2}}{\pi^{d_m}} \int_{S(\C^m)}\frac 1{ \det P}e^{-P^{-1}|\xi|^2} \left|\det\left( \frac{n-1}{n}|\xi|^2-x^2\right)\right|d\xi\end{equation}
Now change variables $\xi\to \sqrt{P}\xi$ to obtain the stated result.

\end{proof}

For $m = 1$ we get
\begin{equation}\label{result2b}\begin{array}{lll} \D^{SU(2)}_n(x)&= & \frac 4{\pi}xe^{-x^2}\int_{0}^\infty
e^{-r} \left|2\frac{n-1}{n} r-x^2\right|dr\\&&\\  &= & \frac{n}{n-1}\frac 4{\pi}xe^{-x^2}\int_{0}^\infty
e^{-r} \left|2 r- \frac{n}{n-1} x^2\right|dr
\\ &&\\
 & = &  x\left(2  x^2-4 \frac{n}{n-1}+8\exp \{-\frac{n-1}{2 n} x^2\}\right) e^{-x^2}\end{array} \end{equation}

\section{Proof of Theorem \ref{maintheorem}}
In this section, we will complete the proof of Theorem \ref{maintheorem}. The main additional ingredient is
the asymptotic expansion of the Szeg\"o kernel. Otherwise the computation is similar to the case of $\CP^m$.

\subsection{Covariance matrix}\label{covariance}
We now calculate the leading terms in $A_n$ and $\Lambda_n$. The key
point is to calculate the mixed derivatives of $\Pi_n$ on the
diagonal. It is  convenient to do the calculation in K\"ahler
normal coordinates about a point $z_0$ in $M$.

Now take two derivatives on both sides of (\ref{bergman}),
\begin{equation}\begin{aligned}
\d_{j}\bar \d_{j'}\Pi_n(z, z)=& n^m (n\d_j\dbar_{j'}\phi e^{n\phi}[1+n^{-1}a_1+\cdots]+e^{n\phi}[n^{-1}\d_j\dbar_{j'} a_1+\cdots]\\
&+2\Re \d_j\phi e^{n\phi}[\dbar_{j'} a_1+\cdots]+n^2\d_j\phi\dbar_{j'}\phi e^{n\phi}[1+n^{-1} a_1+\cdots])
\end{aligned}\end{equation}

We apply (\ref{kahler}) at the origin $z_0$ to get,
\begin{equation}\label{a3}A_n(z_0)=\d_{j}\bar \d_{j'}\Pi_n(z_0, z_0)=\frac{n^{m}}{d_n}\left(nI+a_1 I+n^{-1}(a_2 I+\d_j\bar \d_{j'}a_1+\cdots)\right). \end{equation}

By the same arguments, we compute $B(z_0)$ up to the leading order term,

\begin{equation}B_n(z_0)=\d_{j}\bar \d_{j'}\bar\d_{q'}\Pi_n(z_0, z_0)=\frac{n^{m}}{d_n}[\delta_{jj'}\bar\d_{q'}a_1+\delta_{jq'}\bar\d_{j'}a_1+\cdots, \,\,\,n^{-1}\d_j a_1+\cdots]; \end{equation}
and the leading terms in each entry of $C_n(z_0)$,
\begin{equation}C_n(z_0)=\frac{n^{m}}{d_n}\begin{pmatrix}
  n^2(\delta_{jj'}\delta_{qq'}+\delta_{jq'}\delta_{qj'})(1+n^{-1}a_1+\cdots)&n^{-1}\d_j \d_q a_1+n^{-2}\d_j\d_q a_2+\cdots
\\ n^{-1}\bar\d_j\bar\d_q a_1+n^{-2}\bar\d_j\bar \d_q a_2+\cdots
&1+n^{-1}a_1+\cdots
\end{pmatrix}\end{equation}
We refer to \cite{DSZ2} for more details of those computations.

Together with Lemmas \ref{KR} and \ref{pz}, the formulae for $A_n, B_n, C_n$ give an explicit formula
for $\D_n$ on a general K\"ahler manifold.

\subsection{Proof of Theorem \ref{maintheorem}}
In this section we calculate the leading order term as $n \to \infty$ of the expected density $\D_n(x)$. We have,
$$\begin{aligned} \Lambda_n&=\frac{n^{m}}{d_n}(C_n-B_n A_n^{-1}B^*_n)\\& =\frac{n^{m}}{d_n} \begin{pmatrix}
 n^2 P+O(n)  &n^{-1}\d_j \d_q a_1+O(n^{-2})
\\ n^{-1}\bar\d_j\bar \d_q a_1+O(n^{-2})
&1+n^{-1}a_1+O(n^{-2})
\end{pmatrix}\\ &= \begin{pmatrix}
 n^2 P+O(n)  &n^{-1}\d_j \d_q a_1+O(n^{-2})
\\ n^{-1}\bar\d_j\bar \d_q a_1+O(n^{-2})
&1+n^{-1}a_1+O(n^{-2})
\end{pmatrix}
 \end{aligned}$$
in the last step, we apply the full expansion of $d_n$ (\ref{rock}) and where $P$ is given in \eqref{p}.
%we denote the matrix \begin{equation}\label{p}P:=(\delta_{jj'}\delta_{qq'}+\delta_{jq'}\delta_{qj'})_{\frac{m^2+m}{2}\times %\frac{m^2+m}{2}}\end{equation}
Thus \begin{equation}\det \Lambda_n\sim(n^2)^{\frac {m^2+m}{2}}\det P .\end{equation}
Applying (\ref{rock}) to (\ref{a}) again, we obtain the asymptotic analogoue of \eqref{CPL},  \begin{equation}\det A_n\sim n^{m}\end{equation}

We then have,  \begin{equation}\Lambda_n^{-1}\sim \begin{pmatrix}
 n^{-2} P^{-1}  &n^{-3}\d_j \d_q a_1
\\ n^{-3}\bar\d_j\bar \d_q a_1
&1
\end{pmatrix}+\mbox{lower order terms}\end{equation}
Thus,
\begin{equation}\label{denty2b}p_z(y,0,\xi) =\frac 1{\pi^{d_m} \det P}\frac 1{n^{m}n^{m^2+m}}e^{-n^{-2}P^{-1}|\xi|^2-|y|^2}+\mbox{lower order terms},
\end{equation}
hence by Lemma \ref{KR},
\begin{equation}\begin{aligned}\D_n(x)&=\frac 1{\pi^{d_m}}\frac {xe^{-x^2}}{n^{m^2+3m}}\int_0^{2\pi} \int_{M}\int_{S(\C^m)}\frac 1{ \det P}e^{-n^{-2}P^{-1}|\xi|^2}\\& \left|\det\left(|\xi|^2-n^2 x^2\right)\right|d\xi dV d\theta+\mbox{lower order terms}\end{aligned}\end{equation}
here we substitute $y=xe^{i\theta}$ in the expression of \eqref{denty2b}.

The remainder estimate is discussed in \S\ref{REMAINDER} below.

Now we change variables $\xi\to n\xi$, apply the assumption $\int_M dV=1$ and integrate in the $\theta$ variable to get,
\begin{equation}\D_n(x)=\frac {2\pi xe^{-x^2}}{\pi^{d_m}} \int_{S(\C^m)}\frac 1{ \det P}e^{-P^{-1}|\xi|^2} \left|\det\left(|\xi|^2-I x^2\right)\right|d\xi +O(\frac 1n )\end{equation}

Now change variable $\xi\to \sqrt{P}\xi$ to get,
\begin{equation}\begin{array}{lll} \label{DNDEF} \D_n(x) &= &c_mxe^{-x^2} \int_{S(\C^m)}e^{-|\xi|^2} \left|\det\left(|\sqrt P\xi|^2-I x^2\right)\right|d\xi+O(\frac{1}{n}) \\ &&\\
& = & f_m(x) + O(\frac{1}{n}). \end{array}\end{equation}

\subsection{\label{REMAINDER} Remainder estimate}

To prove the remainder estimate we use more terms in the Bergman kernel expansion and Taylor expansions of the various functions
in the Kac-Rice integrand. We sketch the proof as follows.  First we have,
$$\begin{aligned} \Lambda_n&=\frac{n^{m}}{d_n}(C_n-B_n A_n^{-1}B^*_n)\\& =\frac{n^{m}}{d_n} \begin{pmatrix}
 n^2 P(1+n^{-1}a_1+...)  &n^{-1}\d_j \d_q a_1+O(n^{-2})
\\ n^{-1}\bar\d_j\bar \d_q a_1+O(n^{-2})
&1+n^{-1}a_1+O(n^{-2})
\end{pmatrix}\\ &= \frac{1}{1+n^{-1}a_1+\cdots}\begin{pmatrix}
 n^2 P(1+n^{-1}a_1+\cdots)  &n^{-1}\d_j \d_q a_1+O(n^{-2})
\\ n^{-1}\bar\d_j\bar \d_q a_1+O(n^{-2})
&1+n^{-1}a_1+O(n^{-2})
\end{pmatrix}
 \end{aligned}$$

Hence there exists a complete asymptotic expansion, $$\det \Lambda_n=(n^2)^{\frac {m^2+m}2}\det P (1+n^{-1}b_1+n^{-1}b_2+\cdots)$$
Same apply to $$\det A_n=n^m(1+n^{-1}c_1+n^{-2}c_2+\cdots)$$
where $b_j$ and $c_j$ are polynomials of curvature and uniformly bounded.

This two identities imply the full expansion,
$$\det \Delta=\det A_n \det \Lambda_n=n^{m^2+2m}(1+n^{-1}d_1+n^{-2}d_2+\cdots)\det P$$

We also have,
\begin{equation}\Lambda_n^{-1}= \begin{pmatrix}
 n^{-2} P^{-1}(1-n^{-1}a_1+\cdots)  &n^{-3}(\d_j \d_q a_1+\cdots)
\\ n^{-3}(\bar\d_j\bar \d_q a_1+\cdots)
&1-n^{-1}a_1+\cdots.
\end{pmatrix} \end{equation}
It follows that
$$\begin{array}{lll} p_z^n(y,0,\xi)  &= &\frac 1{\pi^{d_m}(1+n^{-1}d_1+n^{-1}d_2+\cdots)\det P }\frac 1{n^{m}n^{m^2+m}}\\&&\\&&e^{-n^{-2}(1-n^{-1}a_1+\cdots)\xi^* P^{-1}\xi-(1-n^{-1}a_1+\cdots)|y|^2-2n^{-3}\Re [\langle(\d_j \d_q a_1+\cdots)\xi, y\rangle]}. \end{array}$$
where $\langle,\rangle$ is the nature inner product on $\C$.

We substitute this formula into the following integration in Lemma \ref{KR},
$$\D_n(x) = \frac 1{n^m\pi^{d_m}}\int_0^{2\pi}\int_M \int_{S(\C^m)}p^n_{z}(x,\theta,0,\xi)\left|\det\left(\xi\xi^*-n^2I x^2\right)\right|xd\xi dV d\theta, $$  then we change variables $\xi\to n\xi$ and $y=xe^{i\theta}$ to  obtain the exact formula,
$$\D_n(x) :=\frac{x}{\pi^{d_m}}\int_M \int_0^{2\pi}\int_{S(\C^m)} \frac{e^{- (1-n^{-1}a_1+\cdots)\xi^* P^{-1}\xi-(1-n^{-1}a_1+\cdots)x^2-2n^{-2}\Re [\langle\d_j \d_q a_1+\cdots)\xi, xe^{i\theta}\rangle]}}{(1+n^{-1}d_1+n^{-2}d_2+\cdots)\det P} $$$$ |\det(|\xi\xi^*-Ix^2)|)d\xi d\theta dV$$

Put $h=\frac 1 n$, then we rewrite the above formula $\D_x(h):=\D_n(x) $. $\D_x(h)$ is analytic with respect to $h$ for fixed $x$. We Taylor expand $\D_x(h)$ at $h=0$ to obtain,
$$\D_n(x) =D_\infty(x)+  O\left(\frac{1}{n} (x( 1 + x^4) e^{-x^2})\right).$$

\subsection{Riemann surface}

On a Riemann surface of area  $1$, we have
\begin{equation}\D_n(x)=\frac 2{\pi^2}xe^{-x^2}\int_{\C}e^{-|\xi|^2} \left|2|\xi|^2- x^2\right|d\xi +O(\frac 1n) ,\end{equation}
or equivalently \begin{equation}\label{result2}\D_n(x)=\frac 4{\pi}xe^{-x^2}\int_{0}^\infty e^{-r} \left|2r- x^2\right|dr +O(\frac 1n) \end{equation}

\section{Spherical ensemble: Proof of Theorem \ref{PBLIKE}}

In this section, we relate the expected density of critical points of an $L^2$ normalized  random $s_n \in SH^0(M, L^n)$ with
the spherical Haar measure to the expected density $\D_n =\D_n^{d_n}$ in the normalized Gaussian ensemble.
We begin by relating Gaussian and spherical averages of density-valued random variables in Lemma \ref{SPH}. This relation is valid both for the critical point distribution and the value distribution
of \S \ref{VDSec}.

\subsection{\label{RELATION} Relation between spherical and normalized Gaussian averages of density-valued random variables}

\begin{lem} \label{SPH}  The spherical density $\D_n^S$ and the $\gamma^n_{\alpha}$ densities of critical values are related by,
$$\begin{array}{lll} \D_n^{\alpha}(x)  &= &  \half  \frac{\alpha^{d_n}}{\pi^{d_n}}\omega_{2 d_n}\int_0^{\infty} \D_n^{S}(\rho^{-\half} x) e^{- \alpha \rho} \rho^{ d_n-\frac{3}{2}} d \rho \\&&\\
& = &  \half x^{2 d_n - 1}   \frac{\alpha^{d_n}}{ \pi^{d_n}}\omega_{2 d_n}\int_0^{\infty} \D_n^{S}(\rho^{-\half} ) e^{- \alpha x^2 \rho} \rho^{ d_n-\frac{3}{2}} d \rho.  \end{array}$$

\end{lem}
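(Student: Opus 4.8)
The plan is to write the Gaussian integral defining $\D_n^\alpha$ in polar coordinates on $H^0(M,L^n)\cong\C^{d_n}\cong\R^{2d_n}$, splitting $\gamma_\alpha^n$ into a radial factor times the uniform measure on the unit sphere, and then to exploit the exact homogeneity of the critical value distribution recorded in Lemma \ref{r}. Concretely, for a test function $\psi\in C_0(\R_+)$ I would start from
\[
\int_{\R_+}\psi(x)\,\D_n^\alpha(x)\,dx=\E_n^\alpha\langle CV_s,\psi\rangle=\left(\tfrac\alpha\pi\right)^{d_n}\int_{\C^{d_n}}\langle CV_s,\psi\rangle\,e^{-\alpha\|s\|^2}\,Ds,
\]
and write $s=r\sigma$ with $r=\|s\|\ge 0$ and $\sigma\in S^{2d_n-1}$, so that $Ds=r^{2d_n-1}\,dr\,d\Omega(\sigma)$ and $\|s\|^2=r^2$, where $d\Omega$ is the surface measure on $S^{2d_n-1}$.

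The decisive step is to apply Lemma \ref{r} in the form $\langle CV_{r\sigma},\psi\rangle=\langle CV_\sigma(x),\psi(rx)\rangle$ and then integrate over the sphere. Since the normalized Haar measure is $d\nu_n=d\Omega/\omega_{2d_n}$, the angular integral becomes $\int_{S^{2d_n-1}}\langle CV_{r\sigma},\psi\rangle\,d\Omega=\omega_{2d_n}\,\E_{\nu_n}\langle CV_\sigma(x),\psi(rx)\rangle=\omega_{2d_n}\int_0^\infty \D_n^S(x)\,\psi(rx)\,dx$, using the definition \eqref{SPHDEN} of the spherical density $\D_n^S$ (for fixed $r$, the map $x\mapsto\psi(rx)$ is an admissible test function). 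After interchanging the order of integration, I would substitute $u=rx$ in the inner integral, which produces a Jacobian factor $r^{-1}$ that combines with $r^{2d_n-1}$, and read off the density in the variable $u$:
\[
\D_n^\alpha(x)=\left(\tfrac\alpha\pi\right)^{d_n}\omega_{2d_n}\int_0^\infty \D_n^S(r^{-1}x)\,e^{-\alpha r^2}\,r^{2d_n-2}\,dr.
\]

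The remaining work is purely a change of variables. Setting $\rho=r^2$, so that $dr=\half\rho^{-1/2}d\rho$, $r^{2d_n-2}=\rho^{d_n-1}$ and $r^{-1}x=\rho^{-1/2}x$, yields the first displayed formula with its factor $\half$ and exponent $\rho^{d_n-\frac32}$. A further substitution $\rho=x^2\tilde\rho$ then moves the $x$-dependence out of the argument of $\D_n^S$ and into the exponential $e^{-\alpha x^2\tilde\rho}$ and the prefactor $x^{2d_n-1}$ (since $\rho^{-1/2}x=\tilde\rho^{-1/2}$, $d\rho=x^2 d\tilde\rho$, and $\rho^{d_n-\frac32}=x^{2d_n-3}\tilde\rho^{d_n-\frac32}$), producing the second formula. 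The one genuine subtlety, and hence the main point to be careful about, is the measure-valued nature of $CV_s$: it is an almost-surely finite sum of point masses rather than a density, so every manipulation must be carried out after pairing against $\psi$ and taking expectations, and the Fubini interchanges must be justified from the almost-sure finiteness of the number of critical points (the mass bound quoted after \eqref{criticalmeasure}). Once these interchanges are legitimate the density is immediate, and the normalization constant $\omega_{2d_n}$ tracks exactly through the passage from $d\Omega$ to $d\nu_n$.
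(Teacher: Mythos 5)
Your proof is correct and follows essentially the same route as the paper's: decompose the Gaussian measure in polar coordinates on $H^0(M,L^n)$ with the factor $\omega_{2d_n}$ from the normalized Haar measure, apply Lemma \ref{r} to rescale $CV_{r\sigma}$, and change variables $\rho=r^2$ (the Jacobian $r^{-1}$ you extract via $u=rx$ is exactly the factor $r^{-1}$ appearing in the paper's third displayed line). Your test-function formulation just makes explicit the measure-level manipulations the paper performs directly on densities.
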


\begin{proof}

By \eqref{GSPH} and Lemma \ref{r}, we have

$$\begin{array}{lll} \E^n_{\alpha} CV_{s}  &= &   \frac{\alpha^{d_n}}{\pi^{d_n}}\int_{H^0(M, L^n)}  CV_{s}  e^{- \alpha  ||s||^2} D s  \\&&\\
&= &  \frac{\alpha^{d_n}}{\pi^{d_n}} \omega_{2 d_n} \int_0^{\infty} \int_{S H^0}    CV_{r s}  e^{- \alpha  r^2} r^{2d_n-1} dr d \nu_n
 \\ &&\\
&=&\frac{\alpha^{d_n}}{ \pi^{d_n}} \omega_{2 d_n} \int_0^{\infty} \int_{S H^0}    D_n^S(r^{-1} x) r^{-1} e^{- \alpha  r^2} r^{2d_n-1} dr d \nu_n
\end{array}$$
by definition of the normalization of the  Haar spherical
form on $SH^0(M, L^n)$).
We then change variables $\rho = r^2$.

\end{proof}

When $\alpha = d_n$, it follows from Lemma \ref{SPH} that
\begin{equation} \label{SPHDN} \begin{array}{lll}  \D_n^{d_n}( x)  &= &
 \half  \frac{\omega_{2d_n}}{\pi^{d_n}} d_n^{d_n} x^{2d_n-1}\int_0^{\infty} \D_n^{S}(\rho^{-\half} ) e^{- d_n x^2 \rho} \rho^{d_n-\frac{3}{2}} d \rho.
 %\\&&\\
% &= &
%     \;\;  \frac{\omega_{2d_n}}{(2\pi)^{d_n}}x^{2 d_n-1}\int_0^{\infty} \D_n^{S}(d_n^{\half} \rho^{-\half} ) e^{-  x^2 \rho} %\rho^{d_n-1} d \rho,
 \end{array} \end{equation}
%where we change variables  to $\rho' = d_n \rho$ and obtain (dropping the prime).

Changing variables to $y = x^2$,  \eqref{SPHDN} is equivalent to
\begin{equation} \label{PBLAP1}
 \begin{array}{l} K_n y^{-d_n + \half}  d_n^{-d_n}\D_n^{d_n}(y^{\half}) = \lcal (\rho^{d_n-\frac{3}{2}}  d_S^n) (d_n y) ,
 \end{array}
\end{equation}
where $\lcal$ denotes the Laplace transform
and we put
\begin{equation} \label{AN} K_n = 2 \pi^{d_n} \omega_{2d_n}^{-1},\,\,\,d_S^n(\rho) = \D_n^S(\rho^{-\half}) .\end{equation}

If we  change variables $d_n \rho \to \rho$  in \eqref{SPHDN} we also get,

\begin{equation} \label{PBLAP2}
\begin{array}{l}   K_n y^{-d_n + \half} d_n^{-\half} \D_n^{d_n}(y^{\half}) = \lcal (\rho^{d_n-\frac{3}{2}}  D_{d_n^{-1}} d_S^n) ( y) .
 \end{array}
\end{equation}
In the last line,
$D_r f(x) =  f(r^{-1} x)$ denotes the  dilation operator.

\subsection{Spherical density for $SU(2)$ polynomials}

Before proving Theorem \ref{PBLIKE} it is helpful to do the calculations first for the case of $SU(m + 1)$
polynomials, where one has explicit formulae for $\D_n^{S, SU(m+1)}$. The formulae are  simplest when $m = 1$
and so we start with this case. We then follow the outline for the general case.

For $SU(2)$ polynomials, where $d_n = n + 1 ,$   \eqref{result2b} and \eqref{SPHDN}-\eqref{PBLAP2} imply that
\begin{equation} \label{NICE}
K_n d_n^{-\half} y^{-n}  \left(2  y-4 \frac{n}{n-1}+8\exp \{-\frac{n-1}{2 n} y\}\right) e^{-y
} = \lcal (\rho^{n-\frac{1}{2}}  D_{d_n^{-1}} d_S^n) ( y)   \end{equation}

We recall that
\begin{equation} \label{table} p^{-\nu} e^{-a p} =  \lcal {\bf 1}_{[a, \infty]} \frac{(x - a)^{\nu -1}}{\Gamma(\nu)}. \end{equation}
Hence the left side of \eqref{NICE} is $K_n d_n^{-\half} $ times
$$\lcal \left(2 {\bf 1}_{[1, \infty]}(\rho) [\frac{(\rho - 1)^{n-2}}{\Gamma(n -1)} - 4 \frac{n}{n-1}
\frac{(\rho -1)^{n-1} }{\Gamma(n)}] + 8 {\bf 1}_{[\frac{3n-1}{2n} , \infty]}(\rho)  \frac{(\rho - \frac{3n-1}{2n} )^{n-1})}{\Gamma(n)}\right).$$
 It follows that $ \D_n^{S, SU(2)}(\sqrt{d_n} \rho^{-\half})=D_{d_n^{-1}d_S^n}$ equals
$$ \begin{array}{l}
 K_n   \rho^{-n + \frac{1}{2}}   d_n^{-\half}  \left(2 {\bf 1}_{[1, \infty]}(\rho) [\frac{(\rho - 1)^{n-2}}{\Gamma(n -1)} - 4 \frac{n}{n-1}
\frac{(\rho -1)^{n-1} }{\Gamma(n)}] + 8 {\bf 1}_{[\frac{3n-1}{2n}, \infty]}(\rho)  \frac{(\rho - \frac{3n-1}{2n} )^{n-1})}{\Gamma(n)}\right) \\\\ =  K_n d_n^{-\half}  \rho^{-\half} \left(2
  {\bf 1}_{[1, \infty]}(\rho) [\rho^{-1} \frac{(1 - \rho^{-1})^{n-2}}{\Gamma(n -1)} - 4
 \frac{n}{n-1}
\frac{(1- \rho^{-1})^{n-1}}{\Gamma(n)}] + 8  {\bf 1}_{[0, \frac{3n-1}{2n}]}(\rho)  \frac{(1 - \frac{3n-1}{2n} \rho^{-1}  )^{n-1}  )}{\Gamma(n)}\right) . \end{array}$$

We then put $u = \sqrt{d_n} \rho^{-\half}$ to get
$$ \begin{array}{l} \D_n^{S, SU(2)}(u)
=  K_n  d_n^{-1} u  \left(2  {\bf 1}_{[0, \sqrt{n}]}(u) [u^2 d_n^{-1}\frac{(1 - \frac{u^2}{n})^{n-2}}{\Gamma(n -1)} - 4
\frac{n}{n-1}
\frac{(1- \frac{u^2}{n})^{n-1} }{\Gamma(n)}] + 8  {\bf 1}_{[0, b_n]}(u)  \frac{(1 -\frac{u^2}{b_n}  )^{n-1}  )}{\Gamma(n)}\right) . \end{array}$$
where $b_n = \frac{2n(n+1)}{3n -1}$. Recalling that $d_n = n + 1$, we conclude that
$$ \begin{array}{l} \D_n^{S, SU(2)}(u)
= \frac{K_n}{n+1} u  \left(2  {\bf 1}_{[0, \sqrt{n}]}(u) [u^2 (n+1)^{-1} \frac{(1 - \frac{u^2}{n})^{n-2}}{\Gamma(n -1)} - 4
\frac{n}{n-1}
\frac{(1- \frac{u^2}{n})^{n-1} }{\Gamma(n)}] + 8  {\bf 1}_{[0, b_n]}(u)  \frac{(1 -\frac{u^2}{b_n}  )^{n-1}  )}{\Gamma(n)}\right) . \end{array}$$
We further recall  from \eqref{GSPH}  that
$\omega_{2 (n + 1) }= 2 \frac{\pi^{n + 1}}{\Gamma(n + 1)}$  and combine $(n + 1)^{-2}  \Gamma(n -1)^{-1}
\simeq \Gamma(n + 1)^{-1}$ (resp.  $(n + 1)^{-1}  \Gamma(n)^{-1} \simeq \Gamma(n + 1)^{-1}$) to reach the final:
\begin{cor}\label{CPSPH} The spherical density for $SU(2)$ polynomials of degree $n$ is given exactly by
$$\begin{array}{l} \D_n^{S, SU(2)}=u \left(  2 u^2 {\bf 1}_{[0, \sqrt{n}]}(u) [(1- \frac{u^2} {n})^{n-2}- 4 \frac{n}{n-1}  {\bf 1}_{[0, \sqrt{n}]}(u) [(1 - \frac{u^2}{n})^{n-1}] + 8 {\bf 1}_{[0, b_n]}(u)
(1 - \frac{3n-1}{2n} \frac{u^2}{n})^{n-1} \right).\end{array}$$ \end{cor}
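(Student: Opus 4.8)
The plan is to invert the Laplace-transform identity \eqref{PBLAP2} relating the normalized Gaussian density $\D_n^{d_n}$ to the spherical density, taking as input the exact closed form \eqref{result2b} for $\D_n^{SU(2)}$. First I would specialize \eqref{PBLAP2} to $\alpha = d_n = n+1$ and substitute \eqref{result2b} written in the variable $y = x^2$. Cancelling the prefactor $y^{-d_n+\half}$ against the $y^{\half}$ coming from the single power of $x$ in front of \eqref{result2b} leaves $K_n d_n^{-\half}$ times $y^{-n}\big(2y - 4\tfrac{n}{n-1} + 8\exp\{-\tfrac{n-1}{2n}y\}\big)e^{-y}$, which expands into a sum of three elementary terms of the form $y^{-\nu}e^{-ay}$: explicitly $2\,y^{-(n-1)}e^{-y}$, $-4\tfrac{n}{n-1}\,y^{-n}e^{-y}$, and $8\,y^{-n}e^{-\frac{3n-1}{2n}y}$, where the last shift arises from combining $e^{-y}$ with $\exp\{-\tfrac{n-1}{2n}y\}$. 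This is precisely \eqref{NICE}.

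Next I would recognize each summand through the table entry \eqref{table}, $p^{-\nu}e^{-ap} = \lcal\big({\bf 1}_{[a,\infty]}(\rho)\,(\rho - a)^{\nu-1}/\Gamma(\nu)\big)$, and invert term by term. This produces a closed expression for $\rho^{d_n - \frac32} D_{d_n\inv} d_S^n(\rho)$ as a sum of three indicator-supported power functions, with shifts $a = 1$ (for the first two terms) and $a = \frac{3n-1}{2n}$ (for the third). Dividing out the factor $\rho^{d_n - \frac32}$ and pulling $\rho\inv$ out of each $(\rho - a)$ rewrites the powers in the form $(1 - a\rho\inv)^{\bullet}$, the shape stable under the final substitution; at this point $D_{d_n\inv} d_S^n(\rho) = \D_n^{S,SU(2)}(\sqrt{d_n}\,\rho^{-\half})$ is made explicit.

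The third step is the change of variables $u = \sqrt{d_n}\,\rho^{-\half}$. Being monotone decreasing, it sends each half-line $\{\rho \ge a\}$ to a bounded interval $\{0 \le u \le \cdot\}$ and each factor $(1 - a\rho\inv)$ to one of the form $(1 - c\,u^2)$. Carrying out the substitution and tracking the three constants produces the cutoffs ${\bf 1}_{[0,\sqrt n]}(u)$ (from the two $a = 1$ terms) and ${\bf 1}_{[0,b_n]}(u)$ with $b_n = \frac{2n(n+1)}{3n-1}$ (from the $a = \frac{3n-1}{2n}$ term), together with the bases $(1 - u^2/n)$ and $(1 - \frac{3n-1}{2n}\frac{u^2}{n})$ displayed in the statement. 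Finally I would collect the constants: inserting $\omega_{2(n+1)} = 2\pi^{n+1}/\Gamma(n+1)$ from \eqref{GSPH} into $K_n = 2\pi^{d_n}\omega_{2d_n}\inv$ cancels the powers of $\pi$, and the leading-order Gamma-ratio identities $(n+1)^{-2}\Gamma(n-1)\inv \simeq \Gamma(n+1)\inv$ and $(n+1)\inv\Gamma(n)\inv \simeq \Gamma(n+1)\inv$ normalize the three coefficients to $2$, $-4\tfrac{n}{n-1}$, and $8$.

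The main obstacle is not a hard estimate but the sustained bookkeeping: I must track three distinct pairs $(\nu, a)$ through the Laplace inversion, keep the power $\rho^{d_n - \frac32}$ aligned, and push everything through the nonlinear substitution $u = \sqrt{d_n}\,\rho^{-\half}$ so that every indicator endpoint and every $\pi$/$\Gamma$ constant lands correctly. Two subtleties deserve care. The first term has $\nu = n-1$ rather than $n$, so it carries the anomalous normalization $(n+1)^{-2}\Gamma(n-1)\inv$ and an extra factor $u^2$, which must be matched against the weight $2u^2$ in the final answer. Second, the repeated interchange of $d_n = n+1$ with $n$ in the supports and in the bases is exactly where the leading-order Gamma approximations enter; isolating these and verifying they are the only places the stated identity is not literally exact is the delicate part of closing the argument.
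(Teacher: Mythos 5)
Your proposal is correct and takes essentially the same route as the paper's own proof: specialize the Laplace-transform identity \eqref{PBLAP2} to the exact $SU(2)$ formula \eqref{result2b}, split the left side into the three terms $y^{-\nu}e^{-ay}$, invert term by term via \eqref{table}, substitute $u=\sqrt{d_n}\,\rho^{-\half}$, and collect constants using $K_n=2\pi^{d_n}\omega_{2d_n}^{-1}=\Gamma(n+1)$ together with the Gamma-ratio identities $(n+1)^{-2}\Gamma(n-1)^{-1}\simeq \Gamma(n+1)^{-1}$ and $(n+1)^{-1}\Gamma(n)^{-1}\simeq\Gamma(n+1)^{-1}$. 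You also correctly flag the same subtlety the paper passes over silently, namely that the $n$ versus $n+1$ interchanges in the supports and bases and the $\simeq$ Gamma normalizations are the only points at which the stated formula is not a literal identity.
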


It follows that
$$\begin{array}{l} \lim_{n \to \infty} \D_n^{S, SU(2)}( u) =
u \left(  2 u^2 - 4 + 8 e^{-\half u^2} \right) e^{- u^2},\end{array}$$
proving Theorem \ref{PBLIKE} in the $SU(2)$ case.

\subsection{Spherical density for $SU(m + 1)$ polynomials}
We now extend the proof to $SU(m + 1)$ polynomials for general $m$. We do not simplify the integral for $f_m$
in that case but still may use the explicit $n$-dependence to confirm the main result.

Let  $\beta_m = \dim_{\C} S(\C^m)$, and let  $S_1(\C^m)$ denote the unit sphere in the space of complex symmetric matrices with respect to the
usual inner product $\tr A^* A$.
By Proposition \ref{DSUM}, and since the determinant is homogeneous of order $2 \beta_m = \dim_{\R} S(\C^m)$, we have
\begin{equation}\label{CPMD}  \begin{array}{lll} \D^{SU(m+1)}_n(y^{\half}) & = &c_m y^{\half} e^{-y } \int_{S(\C^m)}e^{-|\xi|^2} \left|\det\left(\frac{n-1}{n}|\sqrt P\xi|^2- y\right)\right|d\xi\wedge d\bar\xi \\ &&\\
& = & c_m y^{\half} e^{-y } \int_0^{\infty} \int_{S_1} e^{- r^2}
\left|\det\left(\frac{n-1}{n}|\sqrt Pr \omega|^2- y\right)\right| r^{2 \beta_m -1}   dr d \omega
 \\ &&\\
& = & \frac{c_m}{2}  y^{\half} e^{-y } \int_0^{\infty} \int_{S_1} e^{- \rho}
\left|\det\left(\frac{n-1}{n}|\sqrt P \omega|^2- y \rho^{-1}\right)\right| \rho^{3 \beta_m - 1}   d \rho d \omega
 \\ &&\\
& = & \frac{c_m}{2}  y^{\half}  y^{3 \beta_m } e^{-y } \int_0^{\infty} \int_{S_1} e^{- \rho y}
\left|\det\left(\frac{n-1}{n}|\sqrt P \omega|^2-  \rho^{-1}\right)\right| \rho^{3 \beta_m - 1}   d \rho d \omega
 \\ &&\\
& = & \frac{c_m}{2}  y^{\half}  y^{3\beta_m} e^{-y } \lcal (\rho^{3 \beta_m - 1} F_{n}^{\CP^m} )(y), \\ &&\\
\mbox{where} \;\; F_n^{\CP^m}(\rho): &= & \int_{S_1}
\left|\det\left(\frac{n-1}{n}|\sqrt P \omega|^2-  \rho^{-1}\right)\right| d \omega\end{array} \end{equation}

We further set $a_m = 3 \beta_m$ and recall \eqref{AN}. It follows from \eqref{PBLAP2}  that (with $d_S^n(\rho): = \D_n^{S,SU(m+1)}(\rho^{-\half}) ))$
\begin{equation} \label{FIRST}
K_n  \frac{c_m}{2} d_n^{-\half} y^{-d_n + 1} y^{a_m } e^{-y } \lcal (\rho^{a_m-1} F_{n}^{\CP^m} )(y)=
  \lcal (\rho^{d_n-\frac{3}{2}}  D_{d_n^{-1}} d_S^{n}) ( y)   \end{equation}
We now simplify the left side using further identities for the Laplace transform.
Denote the translation operator by
$\tau_a f(t) = f(t - a)$ and the Heaviside step function by  $H(t) = t_+^0.$  Also,
denote the  $\nu$-fold primitive (fractional integral)  by
$$I^{\nu} f(x) =\int_0^x \frac{(x - t)^{\nu-1}}{\Gamma(\nu)} f(t) d t. $$
%Recall  that if $F = \lcal f $ then
We have (see e.g. \cite{W} Theorem 8.1),
\begin{equation} \label{IDENTITIES} \left\{ \begin{array}{l} \lcal (H(t - a) f(t - a)) = e^{- as} \lcal f (s)
 \\ \\
\lcal I^{\nu } f = s^{-\nu} \lcal f,
%  \\ \\
% y^{-d_n + 1}
%y^{1 + a_m}   \lcal (\rho^{a_m} F_m )(y) e^{-y} =  y^{-d_n + 1}
%y^{1 + a_m}   \lcal  \tau_1 (H(\rho)  \rho^{a_m}  F_m) (y), \\ \\
%\lcal t^{-k} f = I^k \lcal f, .
\end{array} \right. \end{equation}
We use the  identities to simplify the left side of \eqref{FIRST}:
\begin{equation} \label{LAPEXP}\begin{array}{lll}    y^{-d_n + a_m +1}
   \lcal (\rho^{a_m-1} F_{n}^{\CP^m}  )(y) e^{-y}  & = &   y^{-d_n + a_m + 1}
\lcal  \tau_1 (H(\rho)  \rho^{a_m -1}  F_{n}^{\CP^m} ) (y)\\&&\\
& = & \lcal (  I^{d_n - 1 - a_m}   \tau_1 (H(\rho)  \rho^{a_m-1}  F_{n}^{\CP^m} ) (y).    \end{array} \end{equation}
Combining \eqref{FIRST} and \eqref{LAPEXP} and  uniqueness of the Laplace transform gives
\begin{equation} \label{SECOND}
 K_n \frac{ c_m}{2}   I^{d_n - 2 - a_m}   d_n^{-\half} \tau_1 (H(\rho)  \rho^{a_m-1}  F_{n}^{\CP^m} )(\rho)
=  \rho^{d_n-\frac{3}{2}}  D_{d_n^{-1}} d^{n }_S (\rho) \end{equation}

We have,
$$\begin{array}{lll} I^{d_n - 1 - a_m}   \tau_1 (H(\rho)  \rho^{a_m -1}   F_n^{\CP^m}(\rho))(\rho)  &=  &
\int_0^{\rho} \frac{(\rho - t)^{d_n - 1 - a_m-1}}{\Gamma(d_n - 1 - a_m)}      (H(t -1)  (t-1)^{a_m-1}  F_{n}^{\CP^m}  (t-1) )d t\\&&\\
&=  &
\int_1^{\rho} \frac{(\rho - t)^{d_n  - a_m-2}}{\Gamma(d_n - 1 - a_m)}      ((t-1)^{a_m-1}  F_{n}^{\CP^m}  (t-1) )d t
\\&&\\
&=  &
\int_0^{\rho-1} \frac{(\rho - t-1)^{d_n  - a_m-2}}{\Gamma(d_n - 1 - a_m)}      (t^{a_m-1}  F_{n}^{\CP^m}  (t) )d t.

 \end{array} $$

Therefore,
$$\begin{array}{lll} \D_n^{S, SU(m+1)}(d_n^{\half} \rho^{-\half}) & = & K_n \frac{c_m}{2}  \rho^{-d_n + \frac{3}{2}}  d_n^{-\half}
\int_0^{\rho-1} \frac{(\rho - t-1)^{d_n  - a_m-2}}{\Gamma(d_n - 1 - a_m)}
 (t^{a_m-1}  F_{n}^{\CP^m}  (t) )
% \int_{S_1}   t^{a_m}  \left|\det\left(\frac{n-1}{n}|\sqrt P \omega|^2-  t^{-1}\right)\right| d \omega
dt\\&&\\
&=& K_n \frac{c_m}{2}  \frac{1}{\Gamma(d_n - 1 - a_m) }\rho^{ - a_m -  \frac{1}{2}}
 \int_0^{\rho-1} (1 - \frac{ t+1}{\rho})^{d_n  - a_m-2}
%  \int_{S_1}  t^{a_m}  \left|\det\left(\frac{n-1}{n}|\sqrt P \omega|^2-  t^{-1}\right)\right| d \omega
(t^{a_m-1}  F_{n}^{\CP^m}  (t) )  dt,
\end{array} $$
or equivalently with $u = d_n^{\half} \rho^{-\half}, \rho = u^{-2} d_n$,
$$\begin{array}{lll} \D_n^{S, SU(m+1)}(u) & = &
 K_n \frac{c_m}{2}  \frac{1}{ \Gamma(d_n - 1 - a_m) } d_n^{-\half}  (d_n u^{-2})^{ - a_m -  \frac{1}{2}}
 \\&&\\&& \int_0^{d_n u^{-2}-1} (1 - \frac{ (t+1)u^2}{d_n})^{d_n  - a_m-2}    (t^{a_m-1}  F_{n}^{\CP^m}  (t) ) dt
%\int_{S_1} t^{a_m}  \left|\det\left(\frac{n-1}{n}|\sqrt P \omega|^2-  t^{-1}\right)\right| d \omega,
\end{array} $$

Substituting the definition of $ F_n^{\CP^m}(t)$, we observe that
\begin{equation} \label{LIMIT1} \begin{array}{l}
\lim_{n \to \infty}  \int_{S_1} \int_0^{d_n u^{-2}-1} (1 - \frac{ (t+1)u^2}{d_n})^{d_n  - a_m-2}    t^{a_m-1}
\left|\det\left(\frac{n-1}{n}|\sqrt P \omega|^2-  t^{-1}\right)\right| d \omega
\\ \\ =  e^{- u^2} \int_0^{\infty}  e^{- t u^2}   t^{a_m-1}  \left(\int_{S_1}
\left|\det\left(|\sqrt P \omega|^2-  t^{-1}\right)\right| d \omega\right) dt. \end{array} \end{equation}
Also,
$$ \Gamma(d_n - 1 - a_m) d_n^{ a_m + 1} = \frac{d_n^{ a_m + 1} }{(d_n-1)
\cdots (d_n - 1 - a_m) } \Gamma(d_n ) \simeq  \Gamma(d_n), $$
hence
$$\frac{K_n}{\Gamma(d_n - 1 - a_m) }  d_n^{- a_m -1}
\simeq  1$$
Reversing the steps in  \eqref{CPMD} and comparing with \eqref{DNDEF},  it follows that as $n \to \infty$,
\begin{equation} \label{LIMIT2}\begin{array}{lll}
\D_n^{S, SU(m+1)}(u)  &\simeq & \frac{c_m}{2}
 u^{2 a_m + 1} e^{- u^2} \int_0^{\infty}  e^{- \rho u^2}   \rho^{a_m-1}  \left(\int_{S_1}
\left|\det\left(|\sqrt P \omega|^2-  \rho^{-1}\right)\right| d \omega\right) d \rho\\&&\\
&\simeq &  D_{\infty}(u). \end{array} \end{equation}

\subsection{Proof of Theorem \ref{PBLIKE}}

We now go through the general case, closely following the calculations in the special case of $\CP^m$.

By  Theorem \ref{maintheorem},
$$\D_n^{d_n}(y^{\half})  = \D_{\infty}(y^{\half}) + O( \frac{y^k e^{- y}}{n}), \;\; \mbox{with}\;\;
\D_{\infty}(y^{\half}) = f_m(y^{\half}) y^{\half} e^{-y}. $$
In determining the limit of $D_n^S$ it follows from
  \eqref{PBLAP2}  and \eqref{SPHDN} and  the remainder estimate that we may drop the remainder term and then the calculation becomes
almost identical to that of $SU(m+1)$ polynomials, with $ F_n^{\CP^m}(\rho)$ replaced by
$$F_m(\rho) = \int_{S_1(\C^m)} \left|| \sqrt P \omega|^2- \rho^{-1}\right| d \omega. $$

As in \eqref{CPMD},  we have
\begin{equation} \label{f} f_m(\sqrt{y})
%= \half c_m \int_0^{\infty} \int_{S_1(\C^m)} e^{-\rho }\left||\sqrt P \omega|^2- \rho^{-1} |y\right|\rho^{ 3 \beta_m -1} d\rho d \omega
 = \frac{c_m}{2} y^{a_m + \half} e^{-y} \lcal (\rho^{a_m-1} F_m)(y). \end{equation}

By \eqref{PBLAP2}\eqref{f} (with $d_S^n(\rho) = \D_n^S(\rho^{-\half}) ))$ we have
\begin{equation} \label{final}  \begin{array}{l}
%\frac{c_m}{2}  (2 \pi)^{d_n} d_n^{-d_n + \half} \omega_{2d_n}^{-1}
%y^{1 + a_m -d_n } (\lcal \rho^{a_m-1} F_m)(y)  e^{-y}= \lcal (\rho^{d_n-\frac{3}{2}}  d_S^n)) (d_n y),\\ \\
K_n \frac{c_m}{2}   d_n^{ \half}
y^{1 + a_m -d_n } (\lcal \rho^{a_m-1} F_m)(y)  e^{-y}= \lcal (\rho^{d_n-\frac{3}{2}} D_{d_n^{-1}} d_S^n)) ( y).
\end{array}  \end{equation}

 Hence
\begin{equation}\label{final2} \frac{c_m}{2}  K_n   I^{d_n - 2 - a_m}\tau_1 H(\rho) \rho^{a_m} F_m(\rho)  =
\rho^{d_n-\frac{1}{2}} D_{d_n^{-1} } d_S^n (\rho). \end{equation}

Repeating the calculation in the $SU(m)$ case  and setting  $u = \sqrt{d_n} \rho^{-\half}$ gives
\begin{equation} \label{dsn} \begin{array}{lll}
\D_n^S(u)&= & (d_n u^{-2})^{\frac{3}{2} - d_n}
\left(  \frac{c_m}{2} K_n \right)
\int_0^{d_n u^{-2} -1} \frac{(d_n u^{-2} -1- t)_+^{d_n - 2 - a_m}}{\Gamma(d_n - 1 - a_m)} t^{a_m-1} F_m(t) d t
\\&&\\
& = & \frac{c_m}{2}   K_n   \frac{ d_n^{- a_m - 2 } }{\Gamma(d_n - 1 - a_n)} \; u^{2a_m +2}
\int_0^{d_n u^{-2}-1}  (1- \frac{(1+ t))u^2}{d_n})_+^{d_n   - a_m-2} t^{a_m -1} F_m(t) d t.  \end{array}\end{equation}
We note that
$${\bf 1}_{[0, d_n u^{-2} -1]} (1- \frac{(1+ t))u^2}{d_n})_+^{d_n   - a_m-2} t^{a_m-1} F_m(t)
\to \exp (- (1 + t) u^2) t^{a_m-1} F_m(t) $$
montonically as $n \to \infty$ and so
$$\lim_{n \to \infty}
\int_0^{d_n u^{-2}-1}  (1- \frac{(1+ t))u^2}{d_n})_+^{d_n   - a_m-2} t^{a_m-1} F_m(t) d t
= e^{-u^2}\int_0^{\infty}   \exp (- \rho u^2)  \rho^{a_m-1} F_m(\rho)  dt. $$
%Also $ \frac{ d_n^{\pm a_m \pm 2 \pm 1+ \half} }{\Gamma(d_n - 3 - a_n)}  \simeq d_n^{\half}$.

The rest of the calculation is the same as for $\CP^m$ and we get
\begin{equation} \label{dsnb} \begin{array}{lll}
\lim_{n \to \infty} \D_n^S(u)&= & D_{\infty}(u). \end{array} \end{equation}

\section{\label{VDSec} Value distribution}

As a check on the results for the critical value distribution, we give the analogous calculation in this section of
the much simpler expected value distribution, which  is  a probability measure on $\R_+$.
By the value distribution of (the modulus of) a section $s_n\in H^0(M,L^n)$ we mean the probability measure $\mu_{s_n}$  on $\R_+$ defined
on a test function $\psi \in C(\R_+)$ by
\begin{equation} \label{VD} \mu_{s_n} =  (|s_n|_{h^n})_* dV, \;\; \langle \psi,  (|s_n|_{h^n})_* dV \rangle = \int_{M}  \psi (|s_n|_{h^n}) dV. \end{equation}
As is well-known, the distribution is minus  the derivative of  the volume function,
\begin{equation} \label{VOLFUNS} V_{s_n}(\lambda) : =  Vol\{z \in M:  |s_n(z)|_{h^n} > \lambda\}. \end{equation}

Fix $z \in M$ and define the random variable  $\rho^z_n(x)= |f_n(z)| e^{- n \phi(z)/2} = |s_n(z)|_{h^n} $. For a test function $\psi \in C(\R_+)$ we have (for any probability measure on sections)
\begin{align*}&\E_n\psi(\rho_n^z) = \int_0^{\infty} \psi(x) f_n^z(x) dx
\end{align*}
where $f_n^z(x)$ is the pointise density of the distribution of $\rho_n^z$.

By the expected value distribution in the Gaussian ensemble we mean the measures
\begin{equation} \mu_n:= \E_n \mu_{s_n}\end{equation} which is defined in the sense of distribution, \begin{equation} \langle \psi,   \E_n (|s_n|_{h^n})_* dV \rangle = \int_{M} \E_n \psi (|s_n(z)|_{h^n}) dV=\int_M \E_n\psi(\rho_n^z)dV. \end{equation}
To determine the expected value distribution it thus suffices determine $\E_n \psi (|s_n(z)|_{h^n}^2)$ or equivalently
$\E_n V_{s_n}(\lambda)$. We note that $\mu_n$ is always an absolutly continuous measure on $\R_+$ since $s_n$
is analytic.  Hence  we may write it as $f_n(x) dx$.

We also define the expected value distribution in the spherical ensemble ,
\begin{equation} \mu_n^S = \E_n^S \mu_{s_n}, \;\;\;\; s_n \in SH^0(M, L^n). \end{equation}
It is also a continuous measure and we write   $\mu_n^S = f^S_n(x) dx$.

The random variable $\rho^z_n$ is the modulus of the Gaussian random variable
$$p^z_n=f_n(z) e^{- n \phi(z)/2}$$
which is only defined with a choice of local coordinates. But if $\psi$ is a radial test function on $\C$, then
$$\E_n \psi(p^z_n) = \E_n \psi(\rho^z_n), $$
and the right side is globally well defined.
We denote the expected density of $p_n^z$ by $\widehat f_n^z(y)$, which is a density defined on $\C$.  We write it as 
$\widehat f_n^z(x,\theta)$ in polar coordinate $y=xe^{i\theta}$.

\begin{lem} \label{KRval} For any probability measure on sections, the expected density of  $f_n^z$ is given by the formula,
\begin{equation}f^z_n(x) = \int_0^{2\pi} \widehat f_n^z(x,\theta)xd\theta\end{equation}

\end{lem}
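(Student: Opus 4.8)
The statement is a routine marginalization identity, so the plan is simply to unwind the definitions of the two densities and pass between Lebesgue measure on $\C$ and polar coordinates. First I would fix $z \in M$ and record the crucial compatibility coming from $\rho_n^z = |p_n^z|$: for any radial test function of the form $\Psi(y) = \psi(|y|)$ on $\C$, one has $\Psi(p_n^z) = \psi(\rho_n^z)$. Consequently the expectation of the radial observable $\psi(\rho_n^z)$ may be computed either against the density $f_n^z$ of the modulus or against the density $\widehat f_n^z$ of the full complex variable $p_n^z$, and equating the two computations is the whole content of the lemma.

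Concretely, the second step is to write out both expressions. By the defining property of $f_n^z$,
\begin{equation*} \E_n \psi(\rho_n^z) = \int_0^\infty \psi(x)\, f_n^z(x)\, dx, \end{equation*}
while by the definition of $\widehat f_n^z$ as a density on $\C$ with respect to Lebesgue measure $dy$,
\begin{equation*} \E_n \psi(\rho_n^z) = \E_n \Psi(p_n^z) = \int_{\C} \psi(|y|)\, \widehat f_n^z(y)\, dy. \end{equation*}
The third step is the change of variables to polar coordinates $y = x e^{i\theta}$, under which Lebesgue measure becomes $dy = x\, dx\, d\theta$; this turns the $\C$-integral into
\begin{equation*} \int_0^\infty \psi(x) \left( \int_0^{2\pi} \widehat f_n^z(x,\theta)\, x\, d\theta \right) dx. \end{equation*}
Comparing with the first display and using that $\psi \in C(\R_+)$ is arbitrary yields $f_n^z(x) = \int_0^{2\pi} \widehat f_n^z(x,\theta)\, x\, d\theta$, as claimed.

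I do not expect a genuine obstacle here; existence and absolute continuity of both densities is already granted (the sections are analytic), so only the formula relating them is at issue. The single point warranting a word of care is the Jacobian factor $x$, which is not an artifact but exactly the radial part of the area element on $\C$, and it reproduces the weight $x$ in the statement. This is the same passage from a complex density to its radial marginal that was carried out when integrating out the angle variable $\theta$ in Lemma \ref{KR}; the present lemma simply records that step in the value-distribution setting.
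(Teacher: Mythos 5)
Your proof is correct and is precisely the argument the paper intends: the paper states this lemma without proof, having just noted that $\E_n\psi(p_n^z)=\E_n\psi(\rho_n^z)$ for radial $\psi$, and your polar-coordinate marginalization (with the Jacobian factor $x$) is exactly the step being invoked, the same one used to pass from $\widehat{\D}_n$ to $\D_n$ in Lemma \ref{KR}.
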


%\begin{proof}

\subsection{Kac-Rice formula for the Gaussian value density}

First we  compare Gaussian value densities as $\alpha$ changes. We denote $f_n^\alpha$ is the expected density under the Gaussian ensemble \eqref{HGMg}.  The result is analogous to that of Lemma \ref{alpha}:

\begin{lem}\label{alphaf}
$f_n^{\alpha}(x) =  \alpha^{\half}f^1_n(\alpha^{\half} x) $.
\end{lem}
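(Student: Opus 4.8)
The plan is to mirror exactly the proof of Lemma \ref{alpha}, replacing the critical-value empirical measure $CV_s$ by the value measure $\mu_{s_n}$ of \eqref{VD}. The first step is to record the homogeneity of $\mu_{s_n}$ under the scaling $s \to r s$, the analogue of Lemma \ref{r}. Since $|r s_n|_{h^n} = r\,|s_n|_{h^n}$ pointwise on $M$, for any $\psi \in C(\R_+)$ we have
\begin{equation*}
\langle \mu_{r s_n}, \psi \rangle = \int_M \psi(r\,|s_n|_{h^n})\, dV = \langle \mu_{s_n}(x), \psi(r x)\rangle,
\end{equation*}
so that, after the change of variables $y = rx$, $\mu_{r s_n} = \mu_{s_n}(r^{-1} x)$ in the pushforward sense (multiplication by $r$). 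This step is in fact cleaner than the critical-value case of Lemma \ref{r}, because here the entire manifold $M$ contributes to the measure and no critical-point set needs to be tracked; only the heights rescale.

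Next I would carry out the Gaussian change of variables. Writing $f_n^{\alpha}(x)\,dx = \E_n^{\alpha} \mu_{s_n} = (\tfrac{\alpha}{\pi})^{d_n}\int_{\Hn} \mu_{s}(x)\, e^{-\alpha\|s\|^2}\, D s$ from \eqref{HGMg}, substitute $s' = \alpha^{\half} s$. The Jacobian $(\alpha^{-\half})^{2 d_n} = \alpha^{-d_n}$ of this real-linear map cancels the prefactor $\alpha^{d_n}$, leaving $\frac{1}{\pi^{d_n}}\int \mu_{\alpha^{-\half} s'}(x)\, e^{-\|s'\|^2}\, D s'$. Applying the homogeneity above with $r = \alpha^{-\half}$ gives $\mu_{\alpha^{-\half} s'}(x) = \mu_{s'}(\alpha^{\half} x)$, and the remaining integral is then recognized as $\E_n^{1} \mu_{s'}$ evaluated at the dilated argument, which equals $\alpha^{\half} f_n^1(\alpha^{\half} x)\, dx$ once the $dx$-Jacobian of the dilation is accounted for. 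This yields the claimed identity.

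There is no substantial obstacle: the argument is word-for-word the proof of Lemma \ref{alpha}, and the only point requiring care is the bookkeeping of the two Jacobian factors — the $\alpha^{-d_n}$ from rescaling $s$ in the $d_n$-complex-dimensional integral, and the single factor $\alpha^{\half}$ produced when the dilation $x \to \alpha^{\half} x$ is transferred onto the Lebesgue measure $dx$ on $\R_+$. One could alternatively prove the lemma through the pointwise density $\widehat f_n^z$ of Lemma \ref{KRval}, using that $p_n^z$ is a single complex Gaussian whose variance scales by $\alpha^{-1}$ under $\gamma_{\alpha}^n$; but the measure-level argument above is shorter and avoids invoking the explicit joint density.
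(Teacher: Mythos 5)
Your proof is correct and is exactly the argument the paper intends: the paper omits the proof of Lemma \ref{alphaf}, stating only that it is analogous to Lemma \ref{alpha} (via the scaling relation \eqref{frs}, the value-measure analogue of Lemma \ref{r}), and your write-up is precisely that word-for-word adaptation, with the Jacobian factors $\alpha^{-d_n}$ and $\alpha^{\half}$ tracked correctly.
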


Hence it suffices to fix $\alpha = 1$, and in the next Proposition  we determine  the  Gaussian  value distribution when $\alpha=1$ in \eqref{HGMg} \eqref{law}. If we define the expected density of values
,\begin{equation}\E^1_n\left( \int_M\psi(|s_n|_{h^n})dV\right)=\int_0^\infty \psi(x)f_n^1(x)dx. \end{equation}
Then we can prove,

\begin{prop} \label{KRvalprop}The expected value density in the $\alpha =1$ ensemble is given by the formula,
\begin{equation}f_n^1(x) =\frac{1}{\pi} \int_M \frac 1{\Pi_n(z,z)}   x e^{- \Pi_n(z,z)^{-1} x^2} dV(z),\,\,\,x\in \R_+ \end{equation}

\end{prop}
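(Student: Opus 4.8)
The plan is to reduce everything to a one-point computation, exploiting the fact that—unlike the critical-value density of Lemma~\ref{KR}—the value density carries no coarea Jacobian: $\mu_{s_n}=(|s_n|_{h^n})_*dV$ is an honest pushforward of the \emph{fixed} volume measure $dV$, with no constraint imposed on $s_n$. Hence by linearity of expectation and Tonelli (all integrands are nonnegative),
\[
f_n^1(x)=\int_M f_n^z(x)\,dV(z),
\]
where $f_n^z$ is the one-point density of $\rho_n^z=|s_n(z)|_{h^n}$ furnished by Lemma~\ref{KRval}. It therefore suffices to compute the density $\widehat f_n^z$ on $\C$ of the single complex random variable $p_n^z=f_n(z)\eh$ and then feed it into Lemma~\ref{KRval}.

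The key step is to recognize $p_n^z$ as a scalar centered complex Gaussian and to compute its variance. Writing $f_n=\sum_j a_j f_j^n$ with the coefficients normalized as in \eqref{law} at $\alpha=1$ (so $\E a_j\bar a_k=\delta_{jk}$), the value $p_n^z$ is a centered complex Gaussian with
\[
\sigma_n^2(z):=\E_n^1|p_n^z|^2=e^{-n\phi(z)}\sum_j|f_j^n(z)|^2=\Pi_n(z,z),
\]
the last equality being the diagonal value of the covariance kernel \eqref{SZ} at $\alpha=1$. By the scalar ($k=1$) case of \eqref{cxgauss}, its density with respect to Lebesgue measure on $\C$ is the radial function
\[
\widehat f_n^z(y)=\frac{1}{\pi\,\Pi_n(z,z)}\,e^{-|y|^2/\Pi_n(z,z)},
\]
so that $\widehat f_n^z(x,\theta)$ is independent of $\theta$.

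Finally I would assemble the pieces. Inserting this radial density into Lemma~\ref{KRval}, the angular integral is immediate and produces a Rayleigh-type density proportional to $\Pi_n(z,z)^{-1}\,x\,e^{-\Pi_n(z,z)^{-1}x^2}$; integrating over $M$ against $dV$ then yields the stated formula. There is no genuine obstacle comparable to the critical-value case: the absence of the equation $\nabla s_n(z)=0$ means there is no determinant Jacobian and no need to pass to the joint law of $(f_n,f_n',f_n'')$, so the problem collapses to the modulus of a single Gaussian variable. The only points requiring care are the correct evaluation of the variance $\sigma_n^2(z)$—in particular the trivialization factor $\eh$ that converts the local quantity $\sum_j|f_j^n(z)|^2$ into the invariant diagonal value $\Pi_n(z,z)$ appearing in \eqref{SZ}—and the bookkeeping of the constant produced by the angular integration against the Gaussian normalization $\pi^{-1}$, which fixes the overall prefactor in the statement.
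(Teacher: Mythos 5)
Your proposal is correct and follows essentially the same route as the paper: identify $p_n^z$ as a scalar centered complex Gaussian with variance $\Pi_n(z,z)$ (the $k=1$ case of \eqref{cxgauss}), insert the resulting radial density into Lemma \ref{KRval}, and integrate the pointwise densities over $M$. One small point worth pinning down where you write ``proportional to'': the angular integration turns $\frac{1}{\pi\,\Pi_n(z,z)}e^{-x^2/\Pi_n(z,z)}$ into $\frac{2}{\Pi_n(z,z)}\,x\,e^{-x^2/\Pi_n(z,z)}$, which is what Corollary \ref{CPexact} uses, so the prefactor $1/\pi$ in the Proposition as printed appears to be a slip for the factor $2$ produced by the $2\pi$ angular integral against the Gaussian normalization $\pi^{-1}$.
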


\begin{proof}

It follows from the Kac-Rice formula  that
\begin{equation}\label{dddd} \widehat f_n^z(y) =\frac 1{\pi}\frac 1{\Pi_n(z,z)}\exp\left\{-\left\langle y,  \Pi_n(z,z)^{-1} \bar y \right \rangle\right\},
\end{equation}
since  the covariance matrix of the random variable $p_n^z$ is
$$\E_n^1 (f_n^z \overline{f_n^z} )= \E_n^1 |s(z)|_{h_n}^2 = \Pi_n(z,z).$$
To obtain the density of values, we apply Lemma \ref{KRval} and the polar coordinate
$y=xe^{i\theta}$ to \eqref{dddd}, then integrate the pointwise densities over $M$.

\end{proof}

The density is especially simple for $SU(m+1)$ polynomials where $\Pi_n(z,z)$ is the constant $\frac{d_n}{V_m}$
where $V_m$ is the Fubini-Study volume of $\CP^m$.

\begin{cor} \label{CPexact} For $SU(m+1)$ polynomials with $\alpha = 1$ resp. $ d_n$ and with volume $V_m$,
the expected density of critical values is given by

\begin{equation} \label{fnCPm}  f_n^{1, \CP^m} (x) =  \frac{ 2V_m}{d_n }  x e^{- V_m d_n^{-1} x^2},\;\;\;
f_n^{d_n, \CP^m} = 2V_m  x e^{- V_m x^2}.\end{equation}
\end{cor}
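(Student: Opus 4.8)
The plan is to specialize Proposition~\ref{KRvalprop} to $M=\CP^m$ and exploit the homogeneity of the Fubini--Study geometry. The only $z$-dependent quantity in the formula
\[
f_n^1(x) = \frac{1}{\pi}\int_M \frac{1}{\Pi_n(z,z)}\, x\, e^{-\Pi_n(z,z)^{-1}x^2}\,dV(z)
\]
is the diagonal covariance $\Pi_n(z,z)=\E_n^1|s_n(z)|^2_{h^n}$. The heart of the argument is to show that on $\CP^m$ this function is constant, so that the integral over $M$ collapses to multiplication by the volume $V_m$.

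To establish the constancy I would use that $\SU(m+1)$ acts transitively on $\CP^m$ and preserves both the Fubini--Study form $\omega_\FS$ and the Hermitian metric $h_\FS$ on $\ocal(n)$ of \eqref{hfs}. Consequently the $L^2$ inner product \eqref{innera}, and hence the orthogonal projection onto $H^0(\CP^m,\ocal(n))$, are invariant, so the invariantly defined function $z\mapsto \E_n^1|s_n(z)|^2_{h^n}=\Pi_n(z,z)$ is $\SU(m+1)$-invariant; by transitivity it is constant. Its value is pinned down by integrating: orthonormality of the basis $\{s_j^n\}$ gives $\int_M \Pi_n(z,z)\,dV=\sum_j\|s_j^n\|_{L^2}^2=d_n$, so $\Pi_n(z,z)\equiv d_n/V_m$. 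Alternatively, one can read this off directly from the normalized Szeg\"o kernel $\wt\Pi_n(z,w)=(1+z\bar w)^n$ of \eqref{NSK}: at $z=w$ the weight $e^{-n\phi}=(1+|z|^2)^{-n}$ cancels the factor $(1+|z|^2)^n$, leaving a constant.

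Substituting the constant value $\Pi_n(z,z)=d_n/V_m$ into Proposition~\ref{KRvalprop} and performing the now-trivial $z$-integration (which contributes the factor $V_m=\int_M dV$) yields the first formula $f_n^{1,\CP^m}(x)=\frac{2V_m}{d_n}\,x\,e^{-V_m d_n^{-1}x^2}$. For the second formula I would not repeat the computation but instead invoke the scaling relation of Lemma~\ref{alphaf}, namely $f_n^\alpha(x)=\alpha^{\half}f_n^1(\alpha^{\half}x)$, at $\alpha=d_n$: the prefactor $d_n^{\half}$ and the two appearances of $d_n$ inside the $\alpha=1$ formula cancel, leaving the $n$-independent Rayleigh density $f_n^{d_n,\CP^m}(x)=2V_m\,x\,e^{-V_m x^2}$.

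There is no serious obstacle here; this corollary functions as a sanity check rather than a hard result. The one genuinely nontrivial input is the constancy of the diagonal kernel, and even that is a standard consequence of homogeneity. The only point requiring care is the bookkeeping of numerical constants---the factor of $2$ from the $\theta$-integration in Lemma~\ref{KRval}, the $\pi$ in the complex Gaussian density \eqref{dddd}, and whether $V_m$ denotes the normalized or the genuine Fubini--Study volume---since a single consistent convention must be fixed for the stated constants to emerge exactly.
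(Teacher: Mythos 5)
Your proof is correct and follows essentially the same route as the paper: the paper simply asserts the constancy $\Pi_n(z,z)\equiv d_n/V_m$ on $\CP^m$ (which you justify more carefully via $SU(m+1)$-invariance together with the trace identity $\int_M \Pi_n(z,z)\,dV=d_n$), substitutes it into Proposition \ref{KRvalprop}, and gets the $\alpha=d_n$ case from the scaling Lemma \ref{alphaf}, exactly as you do. Your closing caveat about constants is apt---the formula of Proposition \ref{KRvalprop} as printed is missing the factor $2\pi$ from the angular integration of Lemma \ref{KRval}, and the corollary's stated constants correspond to the corrected Rayleigh density $\frac{2x}{\Pi_n(z,z)}e^{-x^2/\Pi_n(z,z)}$ averaged against the normalized volume---but this bookkeeping issue lies in the paper's conventions, not in your argument.
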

As mentioned in the introduction, we usually set $V_m = 1$ for simplicity of notation.

\subsection{Relation between spherical and Gaussian densities}

  The Gaussian value distribution has the same problem as the critical point distribution, namely the weighted
repetition of sections. If we multiply $s \in H^0(M, L^n)$ by $r > 0$ then the volume function changes by
$V_{rs}(\lambda) = V_s(r^{-1} \lambda)$. Consequently,
\begin{equation} \label{frs} f_{rs} = r^{-1} f_s(r^{-1} s). \end{equation}

The discussion is a word-for-word repetition
of \S \ref{RELATION}.  Indeed, in the calculations we only used the relation \eqref{frs} and the fact that the
random variables take values in the densities on $\R$. We therefore omit the proofs, but for clarity do state
the analogous Lemmas.

Exactly
as in Lemma \ref{SPH}, we have

\begin{lem} \label{SPHf}  The spherical density $f_n^S$ and the $\gamma^n_{\alpha}$ densities of values are related by,
$$\begin{array}{l}  f_n^{\alpha}(x) =  K_n^{-1} \alpha^{d_n} \int_0^{\infty} f_n^{S}(\rho^{-\half} x) e^{- \alpha \rho} \rho^{-\half} \rho^{d_n-1} d \rho.  \end{array}$$
\end{lem}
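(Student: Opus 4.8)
The plan is to repeat the proof of Lemma \ref{SPH} almost verbatim, with the value density $f_s$ in place of the critical-value measure $CV_s$ and the dilation identity \eqref{frs} in place of Lemma \ref{r}. First I would express the Gaussian value density in the $\gamma_\alpha^n$ ensemble as an integral over $H^0(M,L^n)\cong\C^{d_n}$,
$$ f_n^{\alpha}(x)\,dx \;=\; \E_n^{\alpha}\mu_{s_n} \;=\; \frac{\alpha^{d_n}}{\pi^{d_n}}\int_{H^0(M,L^n)} \mu_{s}\; e^{-\alpha\|s\|^2}\,Ds, $$
and then pass to spherical coordinates $s = r s'$ with $r\ge 0$ and $s'\in SH^0(M,L^n)$, exactly as in \eqref{GSPH}. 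Since the surface measure on the unit sphere equals $\omega_{2d_n}\,d\nu_n$ with $d\nu_n$ the normalized Haar measure, this gives
$$ f_n^{\alpha}(x)\,dx \;=\; \frac{\alpha^{d_n}}{\pi^{d_n}}\,\omega_{2d_n}\int_0^{\infty}\!\int_{SH^0(M,L^n)} \mu_{rs'}\; e^{-\alpha r^2}\, r^{2d_n-1}\,d\nu_n(s')\,dr. $$

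Next I would insert the dilation law \eqref{frs}, which in density form reads $f_{rs'}(x) = r^{-1} f_{s'}(r^{-1}x)$; this is the exact value-distribution analogue of $CV_{rs}=CV_s(r^{-1}x)$, and the factor $r^{-1}$ is precisely the dilation Jacobian that already appears in the proof of Lemma \ref{SPH}. Carrying out the inner integration against $d\nu_n$ and using the defining relation $f_n^S(y)=\int_{SH^0(M,L^n)} f_{s'}(y)\,d\nu_n(s')$ of the spherical density collapses the sphere integral, leaving
$$ f_n^{\alpha}(x) \;=\; \frac{\alpha^{d_n}}{\pi^{d_n}}\,\omega_{2d_n}\int_0^{\infty} f_n^{S}(r^{-1}x)\, e^{-\alpha r^2}\, r^{2d_n-2}\,dr. $$

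Finally I would change variables $\rho=r^2$, so that $r^{-1}=\rho^{-\half}$ and $r^{2d_n-2}\,dr = \half\,\rho^{d_n-\frac{3}{2}}\,d\rho$, and recall from \eqref{AN} that $K_n^{-1}=\omega_{2d_n}/(2\pi^{d_n})$. This produces exactly
$$ f_n^{\alpha}(x) \;=\; K_n^{-1}\,\alpha^{d_n}\int_0^{\infty} f_n^{S}(\rho^{-\half}x)\, e^{-\alpha\rho}\,\rho^{-\half}\rho^{d_n-1}\,d\rho, $$
the asserted formula. The computation is elementary, and the only point needing care is the bookkeeping of the factor $r^{-1}$: here it enters directly through the density transformation \eqref{frs}, whereas in Lemma \ref{SPH} it arose as the Jacobian of pushing the empirical measure $CV_{s'}$ forward under multiplication by $r$. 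Since $f_s$ and $CV_s$ dilate in the same way and both are $\R_+$-valued densities, no genuinely new difficulty appears, which is exactly why the statement is a word-for-word repetition of the discussion in \S\ref{RELATION}.
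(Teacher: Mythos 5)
Your proposal is correct and is precisely the argument the paper intends: the paper omits the proof of Lemma \ref{SPHf}, stating that it is a word-for-word repetition of the proof of Lemma \ref{SPH} in \S\ref{RELATION} with the dilation law \eqref{frs} playing the role of Lemma \ref{r}, and your computation carries this out faithfully (including the correct bookkeeping of the Jacobian factor $r^{-1}$, the change of variables $\rho = r^2$, and the identification $K_n^{-1} = \omega_{2d_n}/(2\pi^{d_n})$ from \eqref{AN}).
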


We combine Lemmas \ref{alphaf} and \ref{SPHf} to obtain
%\begin{equation} \label{SPHFORMf} A_n y^{-d_n + 1} f^1_n(y^{\half}) =   \lcal (\rho^{d_n-\frac{3}{2}} F^S_n) (y). \end{equation}
%It follows that
%$$A_n (\alpha^{\half} x)^{-2d_n + 1} f^1_n(\alpha^{\half} x) = \int_0^{\infty} f_{S}^n(\rho^{-\half} ) e^{- \alpha x^2 \rho} \rho^{d_n-\frac{3}{2}}  d \rho =  \lcal (\rho^{d_n-\frac{3}{2}} F_n^S) (\alpha x^2) .  $$

%Combining Lemmas \ref{alphaf} and \ref {SPHf}gives:
\begin{cor} \label{CORf}
$$\begin{array}{lll}  f^1_n(\alpha^{\half} x) &= &  \alpha^{-\half}   \alpha^{d_n}  K_n^{-1} \int_0^{\infty} f_n^{S}(\rho^{-\half} x)  e^{- \alpha \rho} \rho^{d_n -\frac{3}{2}} d \rho \\&&\\ &=&      \alpha^{d_n -\half} K_n^{-1}   x^{2d_n -1}  \int_0^{\infty} f_n^{S}(\rho^{-\half} ) e^{- \alpha x^2 \rho} \rho^{d_n-\frac{3}{2}} d \rho. \end{array}$$

\end{cor}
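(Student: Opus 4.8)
The plan is to obtain Corollary~\ref{CORf} by mechanically composing Lemmas~\ref{alphaf} and~\ref{SPHf} and then performing a single scaling change of variables; all of the substantive analytic content has already been extracted into those two lemmas (and, as the paper notes just before the statement, the derivation is a word-for-word repetition of the critical value computation in \S\ref{RELATION}), so there is no genuine obstacle and the only care needed is bookkeeping of the powers of $\rho$ and $x$.

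First I would solve Lemma~\ref{alphaf} for the reference density, writing it as $f^1_n(\alpha^{\half} x) = \alpha^{-\half} f_n^{\alpha}(x)$. I would then substitute the integral expression for $f_n^{\alpha}(x)$ furnished by Lemma~\ref{SPHf} and collect the powers of $\rho$ via $\rho^{-\half}\rho^{d_n-1} = \rho^{d_n-\frac{3}{2}}$, which immediately yields
\begin{equation*}
f^1_n(\alpha^{\half} x) = \alpha^{-\half}\alpha^{d_n} K_n^{-1} \int_0^{\infty} f_n^{S}(\rho^{-\half} x)\, e^{-\alpha \rho}\, \rho^{d_n-\frac{3}{2}}\, d\rho,
\end{equation*}
which is the first displayed equality of the corollary.

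For the second equality I would eliminate the factor $x$ inside the argument of $f_n^S$ by the dilation $\rho \mapsto x^2\rho$. Under this substitution $\rho^{-\half} x \mapsto \rho^{-\half}$, the exponential becomes $e^{-\alpha x^2 \rho}$, and the measure transforms as $\rho^{d_n-\frac{3}{2}}\,d\rho \mapsto x^{2d_n-3}\,\rho^{d_n-\frac{3}{2}}\cdot x^2\,d\rho$, so that a total power $x^{2d_n-1}$ is pulled outside the integral. Absorbing the remaining constants as $\alpha^{-\half}\alpha^{d_n} = \alpha^{d_n-\half}$ then gives the second form and completes the proof. The one point worth double-checking is the exponent arithmetic in the Jacobian, since an off-by-one in the power of $x$ or $\rho$ would be the only way the identity could fail; everything else is a direct transcription of the two cited lemmas.
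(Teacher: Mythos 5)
Your proposal is correct and is exactly the paper's own argument: the paper derives Corollary~\ref{CORf} by composing Lemma~\ref{alphaf} (in the solved form $f_n^1(\alpha^{\half}x)=\alpha^{-\half}f_n^{\alpha}(x)$) with Lemma~\ref{SPHf}, collecting $\rho^{-\half}\rho^{d_n-1}=\rho^{d_n-\frac{3}{2}}$, and then performing the dilation $\rho\mapsto x^2\rho$, just as in the critical value case of \S\ref{RELATION}. Your exponent bookkeeping ($x^{2d_n-3}\cdot x^2=x^{2d_n-1}$ and $\alpha^{-\half}\alpha^{d_n}=\alpha^{d_n-\half}$) checks out.
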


\subsection{Spherical density of values: Proof of Theorem \ref{SDV} }

In this section we prove Theorem \ref{SDV}.
First we prove the statement for $SU(m+1)$ polynomials.

\begin{lem} Let $f_n^{S,\CP^m}(u)$ be the density of values of $SU(m+1)$ polynomials in the spherical ensemble
with volume $V_m = 1$.
Then
$$\lim_{n \to \infty} f^{S,\CP^m}_n(u) = 2 u e^{- u^2}. $$
\end{lem}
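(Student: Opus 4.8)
The plan is to exploit the fact that for $SU(m+1)$ polynomials the pointwise norm $\Pi_n(z,z)$ is the constant $d_n/V_m$, so that the Gaussian value density is known \emph{exactly} from Corollary \ref{CPexact}, and then to invert the Laplace-transform relation of Corollary \ref{CORf} to recover $f_n^{S,\CP^m}$ in closed form. This mirrors the critical-value computation leading to Corollary \ref{CPSPH}, but is considerably simpler because the Gaussian side is a single Gaussian rather than a Kac-Rice integral. Taking $V_m = 1$, Corollary \ref{CPexact} gives $f_n^{1,\CP^m}(x) = \frac{2}{d_n}\, x\, e^{-d_n^{-1}x^2}$.

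First I would substitute this into the second identity of Corollary \ref{CORf} with $\alpha = d_n$ and change variables $y = x^2$. Writing $d_S^n(\rho) = f_n^S(\rho^{-\half})$, the left-hand side collapses to $2 d_n^{-\half} y^{\half} e^{-y}$, and after isolating the integral and rescaling the Laplace variable $s = d_n y$ the relation becomes
\[ \lcal\left(\rho^{d_n-\frac{3}{2}} d_S^n\right)(s) = \frac{2K_n}{d_n}\, s^{-(d_n-1)} e^{-s/d_n}. \]
The right-hand side is exactly a negative power times an exponential, so I would invert it using the table identity \eqref{table} with $\nu = d_n - 1$ and shift $a = 1/d_n$. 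Uniqueness of the Laplace transform then yields the closed form
\[ \rho^{d_n-\frac{3}{2}} d_S^n(\rho) = \frac{2K_n}{d_n\,\Gamma(d_n-1)}\, {\bf 1}_{[1/d_n,\infty]}(\rho)\,(\rho - 1/d_n)^{d_n-2}. \]

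Finally I would pass back to the variable $u = \rho^{-\half}$ (so $\rho = u^{-2}$ and the support becomes $u \in [0,\sqrt{d_n}]$), and simplify the constants using $K_n = \Gamma(d_n)$ --- which follows from \eqref{AN} together with $\omega_{2d_n} = 2\pi^{d_n}/\Gamma(d_n)$ --- and $\Gamma(d_n)/\Gamma(d_n-1) = d_n - 1$. After the powers of $u$ cancel this gives the exact formula
\[ f_n^{S,\CP^m}(u) = \frac{2(d_n-1)}{d_n}\, {\bf 1}_{[0,\sqrt{d_n}]}(u)\, u\,\Bigl(1 - \frac{u^2}{d_n}\Bigr)^{d_n-2}. \]
Letting $n \to \infty$ is then elementary: the prefactor tends to $2$, the indicator tends to $1$ for each fixed $u$, and $(1 - u^2/d_n)^{d_n-2} \to e^{-u^2}$, giving $\lim_{n\to\infty} f_n^{S,\CP^m}(u) = 2u e^{-u^2}$. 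I do not expect a serious obstacle here; the only care needed is the bookkeeping of the exponents and of the shift $a = 1/d_n$ in the Laplace inversion. The reason the closed form is so clean --- and the reason one can read off the limit directly rather than through a dominated-convergence argument --- is the constancy of $\Pi_n(z,z)$ on $\CP^m$, which is special to the $SU(m+1)$ ensemble.
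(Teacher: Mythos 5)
Your proposal is correct and follows essentially the same route as the paper: both combine Corollary \ref{CPexact} with the Laplace-transform relation of Corollary \ref{CORf}, invert via the table identity \eqref{table} with $\nu = d_n-1$, $a = d_n^{-1}$, and pass to $u = \rho^{-\half}$ to get a closed form whose limit is $2u e^{-u^2}$. Your fixing of $\alpha = d_n$ (versus the paper's generic $\alpha$, which cancels anyway) is immaterial, and your exact prefactor $\frac{2(d_n-1)}{d_n}$ is in fact slightly more careful than the paper's, which absorbs the factor $\frac{d_n-1}{d_n}\to 1$ into its asymptotic bookkeeping.
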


\begin{proof}
The  spherical density of values is determined from the Corollaries  \ref{CPexact} and \ref{CORf} (with $V_m=1$) by changing variable
$y = \alpha x^2$,
$$ 2 K_n \frac{1}{ d_n}  y^{-d_n + 1} e^{-  d_n^{-1} y} =  \lcal (\rho^{d_n-\frac{3}{2}} F^S_n) (y). $$
where we put
$$F^S_n(\rho) = f^{S,\CP^m}_n(\rho^{-\half}). $$

By \eqref{table}
with   $a =  d_n^{-1}$ and $\nu =  d_n - 1$, we obtain
$$\lcal (\rho^{d_n-\frac{3}{2}} F^S_n) (y) = 2K_n  \frac {1}{d_n}  \lcal {\bf 1}_{[d_n^{-1}, \infty]}
\frac{(\rho -  d_n^{-1})^{d_n-2}}{\Gamma(d_n -1)}. $$
Thus,
$$ f^{S,\CP^m}_n (\rho^{-\half}) = 2 K_n \rho^{-d_n+\frac{3}{2}}  \frac {1}{ d_n}   {\bf 1}_{[d_n^{-1}, \infty]}
\frac{ (\rho -  d_n^{-1})^{d_n-2}}{\Gamma(d_n-1)}. $$
 With $u = \rho^{-\half}$ we get

\begin{equation} \label{fsn} \begin{array}{lll}  f^{S,\CP^m}_n (u) & = & u^{2d_n-3}  \frac{2K_n }{\Gamma(d_n-1)} \frac {1}{ d_n}   {\bf 1}_{[d_n^{-1}, \infty]} (u^{-2} - d_n^{-1})^{d_n-2}\\&&\\
& = &2\frac{K_n }{\Gamma(d_n)}    u  {\bf 1}_{[0, \sqrt{d_n}]]} (1- \frac{ u^2}{d_n})^{d_n-2} .
\end{array} \end{equation} The Lemma follows.

\end{proof}

\subsection{General K\"ahler manifolds}

In the general case, we also have:

\begin{lem} Let $f_n^{S}(u)$ be the density of values of random $s \in SH^0(M, L^n)$  in the spherical ensemble.
Then
$$\lim_{n \to \infty} f^{S}_n(u) = 2 u e^{- u^2}. $$
\end{lem}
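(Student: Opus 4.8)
The plan is to run the argument for $SU(m+1)$ polynomials verbatim, the only change being that the constant Bergman value (which was $d_n/V_m$ on $\CP^m$) is replaced by the invariant Bergman function $\Pi_n(z,z) = \E_n^1|s_n(z)|^2_{h^n}$ appearing in Proposition \ref{KRvalprop}. This is no longer constant in $z$, but its $n$-dependence is pinned down uniformly by the TYZ expansion \eqref{bergman} together with \eqref{rock}. Everything then reduces to inverting a Laplace transform under the $M$-integral and passing to the limit using $\int_M dV = 1$.

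First I would combine Corollary \ref{CORf} at $\alpha = 1$ with the change of variable $y = x^2$ to rewrite the Gaussian--spherical relation as the Laplace identity
$$\lcal(\rho^{d_n - \frac{3}{2}} F_n^S)(y) = K_n\, y^{\frac{1}{2} - d_n}\, f_n^1(y^{\frac{1}{2}}), \qquad F_n^S(\rho) := f_n^S(\rho^{-\frac{1}{2}}).$$
Substituting the closed formula of Proposition \ref{KRvalprop} for $f_n^1$ and pulling the $y$-dependence out of the $M$-integral gives
$$\lcal(\rho^{d_n - \frac{3}{2}} F_n^S)(y) = 2 K_n\, y^{1 - d_n} \int_M \frac{1}{\Pi_n(z,z)}\, e^{-\Pi_n(z,z)^{-1} y}\, dV(z).$$
Next I would invert the Laplace transform under the integral sign: for each fixed $z$ the table entry \eqref{table} with $a = \Pi_n(z,z)^{-1}$ and $\nu = d_n - 1$ gives $\lcal^{-1}[y^{1-d_n} e^{-ay}] = {\bf 1}_{[a,\infty]}(\rho)(\rho - a)^{d_n-2}/\Gamma(d_n-1)$. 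Interchanging the inversion with $\int_M$, solving for $f_n^S$, and substituting $u = \rho^{-1/2}$ (so that the powers of $u$ collapse exactly as on $\CP^m$) yields the exact formula
$$f_n^S(u) = \frac{2K_n}{\Gamma(d_n-1)}\, u \int_M \frac{1}{\Pi_n(z,z)}\, {\bf 1}_{[0,\sqrt{\Pi_n(z,z)}]}(u)\left(1 - \frac{u^2}{\Pi_n(z,z)}\right)^{d_n-2} dV(z).$$

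To finish I would take $n \to \infty$. Using $\omega_{2d_n} = 2\pi^{d_n}/\Gamma(d_n)$ one has $K_n = \Gamma(d_n)$, so the prefactor is $2(d_n-1)$; and by \eqref{bergman}, \eqref{rock} both $\Pi_n(z,z)$ and $d_n$ equal $n^m(1 + O(1/n))$, uniformly in $z$. Hence $(d_n-1)/\Pi_n(z,z) \to 1$ and $(1 - u^2/\Pi_n(z,z))^{d_n-2} \to e^{-u^2}$ uniformly on compact $u$-sets, while the cutoff ${\bf 1}_{[0,\sqrt{\Pi_n(z,z)}]}(u) \to 1$ for each fixed $u$. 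Since $\int_M dV = 1$, the $M$-integral passes to the limit and we obtain $\lim_{n\to\infty} f_n^S(u) = 2u e^{-u^2}$, as claimed.

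The main obstacle is precisely the point where the general case departs from $\CP^m$: because $\Pi_n(z,z)$ now varies over $M$ it cannot be factored out of the integral, so one must justify both the interchange of the inverse Laplace transform with $\int_M$ and the passage to the limit inside $\int_M$. Both rest on the uniform (indeed $C^\infty$) control furnished by the TYZ expansion, which guarantees $\Pi_n(z,z)/d_n \to 1$ uniformly; once that uniformity is in hand, the limit is immediate from the normalization $\int_M dV = 1$.
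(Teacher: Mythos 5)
Your proof is correct, but it takes a genuinely different route from the paper's. The paper never carries the $M$-integral through the Laplace inversion: it uses the TYZ expansion at the level of the Gaussian density to assert that the general case agrees with the $SU(m+1)$ (constant Bergman function) case up to a remainder of order $n^{-1}(1+|x|^4)e^{-|x|^2}$, transfers that remainder through the Gaussian--spherical Laplace relation (a step it only sketches, pointing to \S \ref{REMAINDER}), and then declares the rest ``identical to the case of $SU(m+1)$ polynomials.'' You instead postpone all approximation to the very end: inverting the Laplace transform pointwise in $z$ under the $M$-integral yields an exact closed formula for $f_n^S(u)$ at every finite $n$ on an arbitrary K\"ahler manifold --- the general-$(M,\omega)$ analogue of \eqref{fsn} --- after which the limit is an elementary uniform/dominated convergence argument requiring only $\E_n^1|s_n(z)|^2_{h^n}=n^m(1+O(n^{-1}))=d_n(1+O(n^{-1}))$ uniformly in $z$. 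What your route buys is precisely the step the paper leaves vague: you never need to explain why a pointwise $O(1/n)$ error on the Gaussian side stays negligible after inverse Laplace transform (which is not obvious, since inversion does not trivially preserve smallness); what the paper's route buys is verbatim reuse of the $\CP^m$ computation and of the remainder machinery of \S \ref{REMAINDER}. Two points to tighten in your write-up. First, the interchange of Laplace inversion with $\int_M$ does not actually rest on TYZ: the clean justification is to apply $\lcal$ to your claimed exact formula, swap $\lcal$ with $\int_M$ by Tonelli (the integrand is nonnegative), recover your second display, and invoke uniqueness of the Laplace transform --- the same uniqueness argument the paper uses in passing from \eqref{FIRST} to \eqref{SECOND}; TYZ enters only in the final limit. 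Second, you quietly replaced the constant $\frac{1}{\pi}$ printed in Proposition \ref{KRvalprop} by $2$ (your ``$2K_n$''); that is the correct constant --- it is what the angular integration of Lemma \ref{KRval} produces, and it is the one consistent with Corollary \ref{CPexact} and with the limit $2ue^{-u^2}$ --- but you should note explicitly that you are correcting a misprint in the Proposition rather than quoting it as stated.
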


\begin{proof} We combine Proposition \ref{KRvalprop} and Corollary \ref{CORf}. The exact formula of the Proposition
shows that the general spherical density is asymptotic to the $SU(m + 1)$ case in dimenson $m$ with a remainder
term of order $n^{-1} (1 + |x|^4)e^{-|x|^2}$. The remainder estimate is similar to but simpler than that for the
critical point density in \S \ref{REMAINDER} and is omitted.  It follows that
$$ f^S_n (\rho^{-\half}) \simeq  \rho^{-d_n+\frac{3}{2}}  2 K_n \frac {1}{ d_n}   {\bf 1}_{[d_n^{-1}, \infty]}
\frac{ (\rho -  d_n^{-1})^{d_n-2}}{\Gamma(d_n-1)}. $$ The rest of the calculation is then identical to the
case of $SU(m + 1)$ polynomials.

\end{proof}

\end{document}